\newtheorem{thm}{Theorem}[section]
\newtheorem{lemma}{Lemma}[section]
\newtheorem{prop}{Proposition}[section]
\theoremstyle{definition}
 \numberwithin{equation}{section}
\newcommand{\rr}{\mathbb{R}}
\newcommand{\al}{\alpha}
\newcommand{\de}{\delta}
 \newcommand{\eps}{\epsilon}
\newcommand{\la}{\lambda}
 \renewcommand{\(}{\left(}
\renewcommand{\)}{\right)}
\renewcommand{\[}{\left[}
\renewcommand{\]}{\right]}
\begin{document}
\title[On the supercritical  mean field equation  on  pierced domains]{On the supercritical mean field equation  on   pierced domains}

\author{Mohameden Ould Ahmedou}
\address[Mohameden Ould Ahmedou] {Mathematisches Institut, Justus-Liebig-University Giessen, Ludwigstra{\ss}e 23
35390 Giessen, Germany}
\email{Mohameden.Ahmedou@math.uni-giessen.de}

 \author{Angela Pistoia}
\address[Angela Pistoia] {Dipartimento SBAI, Universit\`{a} di Roma ``La Sapienza", via Antonio Scarpa 16, 00161 Roma, Italy}
\email{pistoia@dmmm.uniroma1.it}

\begin{abstract}
We consider the problem
$$(P_\eps)\qquad
 \Delta u +\lambda  { e^{u }\over\int\limits_{\Omega\setminus B(\xi,\eps)}e^u}=0\  \hbox{in}\ \Omega\setminus B(\xi,\eps),\quad u =0\
\hbox{on}\ \partial\(\Omega\setminus B(\xi,\eps)\), $$
where     $\Omega$ is a smooth bounded open domain in $\rr^2$ which contains the point $\xi.$
We prove that if $\lambda>8\pi,$  problem $(P_\eps)$ has a solutions $u_\eps$ such that
$$u_\eps(x)\to  {8\pi+ \lambda\over2}  G(x,\xi)  \ \hbox{uniformly on compact sets of $\Omega\setminus\{\xi\}$ }$$
as $\eps$ goes to zero. Here $G$ denotes Green's function of      Dirichlet Laplacian in $\Omega.$\\
 If $\lambda\not\in 8\pi  \mathbb N$ we will not make any symmetry assumptions on $\Omega,$ while
if $\lambda \in 8\pi  \mathbb N$   we will assume that $\Omega$ is  invariant under a rotation through an angle ${ 8\pi^2\over \la}  $  around the point $\xi.$

 \end{abstract}
\subjclass{35J60, 35B33, 35J25, 35J20, 35B40}

\date{\today}

\keywords{mean field equation, blow-up solutions, supercritical problem, pierced domain} \maketitle

\footnotetext{The authors have been supported by   Vigoni Project E65E06000080001.}

\section{Introduction}

Our paper concerns with the mean field equation
\begin{equation}\label{p0}
\left\{\begin{aligned}
& \Delta u +\lambda  { e^{u }\over\int\limits_{D }e^u}=0\quad \hbox{in}\ D ,\\
   &u =0\quad
\hbox{on}\ \partial D ,\\
\end{aligned}\right.
\end{equation}

when $\la\in\mathbb R$   and $ D$ is a smooth bounded domain of $\rr^2.$
It is well known that solutions to \eqref{p0} are nothing but the critical points of the functional $J_\la:H^1_0( D)\to\rr$ defined by
$$J_\la(u):={1\over2}\int\limits_ D |\nabla u|^2-\la\ln\int\limits_ D e^u.$$
This variational problem arises from Onsager's vortex model for turbulent Euler flows. In this interpretation, $-u/\la$ is the stream function in the infinite vortex limit, whose canonical Gibbs measure and partition function are finite if $\la<8\pi.$ In this situation Caglioti et al. \cite{clmp1} and Kiessling \cite{k} proved
the existence of a minimizer of $J_\la$. Indeed,
the classical Moser-Trudinger inequality \cite{m}
$${1\over2}\int\limits_D |\nabla u|^2\ge{1\over 8\pi}\ln \int\limits_D e^{8\pi u}\ \hbox{for any} \ u\in H^1_0(D)$$
implies the compactness and coercivity properties for $J_\la$   if $\la<8\pi.$ Therefore, problem \eqref{p0} has at least a solution for any $\la<8\pi.$ \\
When $\la\ge8\pi$ the situation turns out to be more complicate, because in general $J_\la$ is no longer compact and coercive. This is a supercritical case for the Moser-Trudinger inequality.\\
The existence of solutions of \eqref{p0} actually depends on the geometry of the domain.
For example, problem \eqref{p0} has always a solution  whenever $\la\not\in8\pi\mathbb N $  and  $D$ is not simply connected, as established by Chen and Lin \cite{cl1,cl2}
 using a degree argument.\\
  If $\la\ \in8\pi\mathbb N $ the existence of solutions to problem \eqref{p0} is a delicate issue, because it does not depend  only on the topology, but also on the geometry of the domain. Indeed, when   $\la=8\pi$ problem \eqref{p0} has no solutions if $D$ is a ball and has at least one solution if $D$ is a long and thin  rectangle as showed by Caglioti et al. \cite{clmp2}.
  Recently, Bartolucci and Lin \cite{bl} proved that if  $\la=8\pi$  and the Robin's function of $D$ has more than one maximum point then problem \eqref{p0} has at least one solution. Up to our knowledge, there are no results  concerning the supercritical case $\la\ \in8\pi\mathbb N $ and $\la>8\pi.$
  \\\\

In this paper we prove that if $\lambda>8\pi$ problem  and $D$ has a small hole and is suitably symmetric, then the
problem \eqref{p0} has at least one solution.  More precisely,
we consider  the problem
 \begin{equation}\label{p}
\left\{\begin{aligned}
& \Delta u +\lambda  { e^{u }\over\int\limits_{\Omega_\eps}e^u}=0\quad \hbox{in}\ \Omega_\eps,\\
   &u =0\quad
\hbox{on}\ \partial\Omega_\eps,\\
\end{aligned}\right.
\end{equation}
where   $\lambda>8\pi$ is a   positive parameter and $\Omega_\eps:=\Omega\setminus B(\xi,\eps),$    $\eps$ is a small positive number, $\xi\in\Omega$ and  $\Omega$ is a smooth bounded domain in $\rr^2$ such that
\begin{equation}
\label{ksym}
\left.
\begin{aligned}(i)\ &\hbox{if $\la\not \in  8\pi\mathbb N,$ we will not require any symmetry assumptions on $\Omega,$} \\
& \\
(ii)\ &\hbox{if $\la  \in  8\pi\mathbb N,$  i.e. $\la=8\pi\kappa $ for some $\kappa\in\mathbb N,$ we will assume that } \\
&\hbox{$\Omega$ is $\kappa-$symmetric with respect to the point $\xi$, i.e.}\\
 &x \in\Omega-\xi\  \hbox{if and only if}\     \Re_\kappa x\in\Omega-\xi,\  \hbox{where}\ \Re_\kappa:=\(\begin{matrix}\cos{ \pi\over \kappa}&\sin{ \pi\over \kappa}\\
-\sin{ \pi\over \kappa}&\cos{\pi\over \kappa}\\ \end{matrix}\)\\ \end{aligned}\right\}
\end{equation}
 Our main result reads as follows.

 \begin{thm}\label{main} Let $\lambda>8\pi$ and assume (\ref{ksym}).
  If $\eps$ is small enough problem \eqref{p} has a   solution $u_\eps$ such that as $\eps$ goes to zero
 $$u_\eps(x)\to  {8\pi+\lambda\over2}  G(x,\xi)  \ \hbox{uniformly on compact sets of $\Omega\setminus\{\xi\}$.}$$

 \end{thm}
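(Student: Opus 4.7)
My approach is Lyapunov--Schmidt reduction, motivated by the following flux identity: integrating the equation on $\Omega_\eps$ gives total source mass $\la$, while the flux of the prescribed limit $\frac{8\pi+\la}{2}G(\cdot,\xi)$ through $\partial\Omega$ is exactly $\frac{8\pi+\la}{2}$; the hole boundary $\partial B(\xi,\eps)$ must therefore absorb the remaining mass $\frac{\la-8\pi}{2}$. In particular, the blow-up profile near $\xi$ cannot be a standard Liouville bubble but must be a modified radial profile adapted to the Dirichlet condition on $\partial B(\xi,\eps)$. Setting $r=|x-\xi|$, $s=\log r$, the radial solutions of $-\Delta u = e^u$ take the explicit form $f(r)=w(s)-2s$ with $w(s)=\log(E/\mu)-2\log\cosh\bigl(c(s-s_0)\bigr)$ and $c=\sqrt{E/2}$; the parameters are tuned so that $f(\eps)=0$ and so that, as $\eps\to 0$, $c\to \la/(8\pi)$ and $f$ matches $\frac{8\pi+\la}{2}G(\cdot,\xi)$ on compacta of $\Omega\setminus\{\xi\}$. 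The ansatz $W_{\eps,d}$ is then obtained by projecting this radial profile, smoothly glued to the regular part of $G(\cdot,\xi)$, onto $H^1_0(\Omega_\eps)$, parametrized by a single scalar $d$ controlling the shift $s_0$.

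\textbf{Linear theory and correction.} I estimate the error $R_{\eps,d}=-\Delta W_{\eps,d}-\la e^{W_{\eps,d}}/\int_{\Omega_\eps}e^{W_{\eps,d}}$ in a weighted norm and study the linearization $L_{\eps,d}\phi = -\Delta\phi-\la e^{W_{\eps,d}}\phi/\int_{\Omega_\eps}e^{W_{\eps,d}}+(\text{nonlocal mean-field correction})$. By a blow-up/contradiction argument, combined with a nondegeneracy result on the annular limit $\rr^2\setminus B(0,1)$, I prove that $L_{\eps,d}$ is uniformly invertible modulo the one-dimensional approximate kernel spanned by $\partial_d W_{\eps,d}$. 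A Banach contraction then produces a small $\phi_{\eps,d}\in H^1_0(\Omega_\eps)$ such that $W_{\eps,d}+\phi_{\eps,d}$ satisfies the projected nonlinear equation.

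\textbf{Reduction and conclusion.} What remains is a single scalar equation $F_\eps(d)=0$ whose leading-order expansion involves the Robin function $H(\xi,\xi)$ together with a contribution from the hole geometry. When $\la\notin 8\pi\mathbb N$ this leading part is nondegenerate and the implicit function theorem delivers a root $d_\eps$. When $\la=8\pi\kappa$ a resonance makes the leading coefficient vanish; one then restricts to the $\Re_\kappa$-invariant subspace of $H^1_0(\Omega_\eps)$, which kills the spurious rotational directions, and the assumed $\kappa$-symmetry of $\Omega$ forces the next-order term to be of definite sign, again yielding a root. The main obstacle throughout is the linear theory: the pierced geometry constrains the ansatz to a single scaling parameter, rather than the usual three for an $\rr^2$ Liouville bubble, so the standard Liouville nondegeneracy does not apply and the required invertibility must be proved directly on the annular limit while simultaneously controlling the nonlocal mean-field term.
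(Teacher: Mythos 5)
Your ansatz is in substance the right one: the radial profile $f(r)=\log\frac{2c^2}{r^2\cosh^2(c\log(r/\de))}$ with $c=\la/(8\pi)$ is exactly the paper's bubble, since $f=w^\al_\de+(\al-2)\ln|x|$ with $\al=2c=\la/(4\pi)$ solves the singular Liouville equation $-\Delta w=|x|^{\al-2}e^w$ on $\rr^2$, and your flux bookkeeping (total mass $\la$, flux $\tfrac{\la+8\pi}{2}$ through $\partial\Omega$, hence $\tfrac{\la-8\pi}{2}$ absorbed by the hole) is precisely what forces the matching condition that the paper encodes as \eqref{delta1}--\eqref{delta2}. The gap is in the linear theory, which you yourself flag as the main obstacle. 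The matching gives $\de\sim\eps^{(\al-2)/(2\al)}$ with exponent in $(0,1/2)$, so $\eps/\de\to0$: the hole is asymptotically invisible at the concentration scale, and the correct limit problem for the blow-up argument is the linearized \emph{singular} Liouville equation $-\Delta\phi=2\al^2\frac{|y|^{\al-2}}{(1+|y|^\al)^2}\phi$ on all of $\rr^2$, not a nondegeneracy statement on the exterior domain $\rr^2\setminus B(0,1)$. No such ``annular'' nondegeneracy result is available or relevant; what is needed is the classification of kernel elements of the singular problem (Del Pino--Esposito--Musso, Theorem \ref{esposito}). This is where the dichotomy $\la\in8\pi\mathbb N$ versus $\la\notin8\pi\mathbb N$ actually enters: for $\al\notin2\mathbb N$ the kernel is spanned by the radial function $\frac{1-|y|^\al}{1+|y|^\al}$ alone, while for $\al\in2\mathbb N$ two extra angular modes $\frac{|y|^{\al/2}}{1+|y|^\al}\cos\frac{\al\theta}{2}$ and $\frac{|y|^{\al/2}}{1+|y|^\al}\sin\frac{\al\theta}{2}$ appear; a one-parameter reduction cannot absorb these two directions, so without the symmetry restriction your scheme already fails at the stage of invertibility modulo the approximate kernel, not at the stage of the reduced equation.

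Consequently your treatment of the resonant case is a misdiagnosis: there is no reason for the scalar reduced equation $F_\eps(d)=0$ to degenerate at $\la=8\pi\kappa$, and the $\Re_\kappa$-invariance is not there to make a ``next-order term of definite sign''; it is there to kill the two angular kernel modes so that the linearization is controllable. On the remaining radial direction the paper does something sharper that would simplify your scheme: it fixes $\de$ a priori by the matching relation and proves (Proposition \ref{inv}, Step 2, testing against the projected kernel element $P_nZ_n$) that the nonlocal mean-field term together with the Dirichlet condition on $\partial B(0,\eps)$ expels the radial mode from the kernel, at the price of an inverse of size $O(|\ln\eps|)$; a plain contraction then closes the argument with no reduced equation at all. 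If you keep the Lyapunov--Schmidt route you must (i) replace the annular limit problem by the singular Liouville linearization on $\rr^2$, (ii) invoke the symmetry at the linear stage when $\al\in2\mathbb N$, and (iii) actually expand and solve $F_\eps(d)=0$; as written, these steps are missing or set up on the wrong limit problem.
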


Here
\begin{equation}\label{green}
G(x,y)={1\over 2\pi}\ln{1\over |x-y|}+H(x,y),\quad x,y\in\Omega
\end{equation}
is the   Green's function of   Dirichlet Laplacian in $\Omega$ and $H(x,y)$ is its regular part.
The function $H(x,x)$ is the Robin's function of the domain $\Omega.$\\

 It is not clear if   the symmetry assumption (\ref{ksym}) when $\la  \in  8\pi\mathbb N $ can be removed. Indeed, let us consider the simple case $\la=8\pi$ and $\Omega_\eps:=B(0,1)\setminus B(\xi,\eps)$. If $\xi=0$ then
  problem \eqref{p0} has a radial solution (see for example Caglioti et al.   \cite{clmp2}). On the other hand if $\xi\not=0$ the problem \eqref{p0} has no solutions (see Bartolucci and Lin \cite{bl}). We point out that    $0$ is the unique critical  point of the Robin's function in the ball $B(0,1).$ This result suggests that existence of   solutions in the pierced domain $\Omega_\eps $ depends on the mutual position of the center of the hole $\xi$ and the critical points of the Robin's function of the domain $\Omega.$ Indeed, in our situation, if $\la=8\pi\kappa$ for some $\kappa\in\mathbb N$, we find a solution provided the domain $\Omega$ is symmetric with respect to the center of the hole, namely the point $\xi.$   We point out that also in this case, the center of symmetry is a critical point of the Robin's function. So, a couple of questions naturally arise.
 \\
  \textsc{Question 1:}
  \textit{does problem \eqref{p0} have a solution if $\lambda\in\{16\pi,24\pi,32\pi,\dots\}$ when   $\xi$ is not a critical point of the Robin's function of the domain $\Omega$?} \\
\textsc{Question 2:}
\textit{does problem \eqref{p0} have a solution if $\lambda\in\{16\pi,24\pi,32\pi,\dots\}$ when $\xi$ is  a critical point of the Robin's function of the domain $\Omega$, but $\Omega$ is not symmetric with respect to it?}\\

   The argument we use to find the solution relies on a simple contraction mapping argument. We set    $\al:={\la\over4\pi} $ and we look for a solution to problem \eqref{p} whose   shape resembles the bubble
\begin{equation}\label{walfa}
w^\al_\de(x):=\ln 2\al^2{\de^\al\over\(\de^\al+|x|^\al\)^2}\quad
x\in\rr^2,\ \de>0
\end{equation}
which solve the singular
Liouville problem (see Prajapat and Tarantello \cite{pt})
\begin{equation}\label{plim}
-\Delta w=|x|^{\al-2}e^w\quad \hbox{in}\quad \rr^2,\qquad
\int\limits_{\rr^2} |x|^{\al-2}e^{w(x)}dx<+\infty.
\end{equation}
If $\al$ is not an even integer, namely $\la\not\in8\pi\mathbb N,$ the linear operator ${\mathcal L}_\lambda$ introduced in \eqref{lla} is substantially invertible, while if $\al$ is an even integer, namely $\la \in8\pi\mathbb N,$ we have to  look for a solution to problem \eqref{p} in the space of
  symmetric functions according to \eqref{even}, where the linear operator ${\mathcal L}_\lambda$  is substantially invertible.
  Therefore, a direct contraction mapping argument is enough to catch the solution.

\section{The ansatz}\label{uno}

For the sake of simplicity, we will assume $\xi=0.$

Let us introduce the projection  $P_\eps  u$ of  a function $u$
into $H^1_0(\Omega_\eps),$ i.e.
\begin{equation}\label{pro}
 \Delta P _\eps u=\Delta u\quad \hbox{in}\ \Omega_\eps,\qquad  P _\eps u=0\quad \hbox{on}\ \partial\Omega_\eps.
\end{equation}

\begin{lemma}
\label{pro-exp}
Assume $\de\sim d\eps^\beta$ for some $d>0$ and $\beta\in(0,1).$
It holds true that
$$P _\eps w^\al_\de(x)=w^\al_\de(x)-\ln2\al^2\de^\al+4\pi\al H(x,0)-\gamma^\al_{\de,\eps} G(x,0)+O\(\de^\al\)+O\(\eps\)$$
where
\begin{equation}\label{gamma}
\gamma^\al_{\de,\eps}:={\ln{1\over \(\de^\al+\eps^\al\)^2}+4\pi\al H(0,0)\over{1\over2\pi}\ln{1\over \eps }+ H(0,0)}.\end{equation}
\end{lemma}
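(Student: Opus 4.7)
The plan is to set
$$\Phi(x):=w^\al_\de(x)-\ln(2\al^2\de^\al)+4\pi\al H(x,0)-\gamma^\al_{\de,\eps}G(x,0),$$
show that $R:=P_\eps w^\al_\de-\Phi$ is harmonic in $\Omega_\eps$, estimate $R$ on each of the two components of $\partial\Omega_\eps$, and conclude via the maximum principle. Harmonicity of $R$ is immediate: $H(\cdot,0)$ is harmonic on $\Omega$ and, since $0\notin\Omega_\eps$, $G(\cdot,0)$ is harmonic on $\Omega_\eps$, so $\Delta\Phi=\Delta w^\al_\de=\Delta P_\eps w^\al_\de$ by definition \eqref{pro}.

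For the outer boundary $\partial\Omega$, since $|x|$ is uniformly bounded away from $0$, I would expand $w^\al_\de(x)=\ln(2\al^2\de^\al)-2\al\ln|x|+O(\de^\al)$ and then use \eqref{green} together with $G(\cdot,0)|_{\partial\Omega}=0$ to rewrite $-2\al\ln|x|=-4\pi\al H(x,0)$ on $\partial\Omega$. This gives $w^\al_\de|_{\partial\Omega}=\ln(2\al^2\de^\al)-4\pi\al H(x,0)+O(\de^\al)$, and since both $P_\eps w^\al_\de$ and $G(\cdot,0)$ vanish on $\partial\Omega$, one obtains $R|_{\partial\Omega}=O(\de^\al)$.

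On the inner boundary $\partial B(0,\eps)$ the function $w^\al_\de$ is radial and takes the constant value $\ln\frac{2\al^2\de^\al}{(\de^\al+\eps^\al)^2}$, while smoothness of $H$ at $0$ yields $H(x,0)=H(0,0)+O(\eps)$ and therefore $G(x,0)=\tfrac{1}{2\pi}\ln(1/\eps)+H(0,0)+O(\eps)$ whenever $|x|=\eps$. Substituting into $\Phi$ gives
$$\Phi(x)\bigl|_{\partial B(0,\eps)}=\ln\frac{1}{(\de^\al+\eps^\al)^2}+4\pi\al H(0,0)-\gamma^\al_{\de,\eps}\Bigl(\tfrac{1}{2\pi}\ln(1/\eps)+H(0,0)\Bigr)+O(\eps),$$
and the bracketed leading term vanishes exactly by the definition \eqref{gamma} of $\gamma^\al_{\de,\eps}$. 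A short check confirms that, under the hypothesis $\de\sim d\eps^\beta$ with $\beta\in(0,1)$, both the numerator and the denominator in \eqref{gamma} are of order $\log(1/\eps)$, so $\gamma^\al_{\de,\eps}$ remains bounded (in fact $\gamma^\al_{\de,\eps}\to 4\pi\al\beta$); consequently the $O(\eps)$ remainder survives and $R|_{\partial B(0,\eps)}=O(\eps)$. The maximum principle applied to the harmonic function $R$ then yields $\|R\|_{L^\infty(\Omega_\eps)}=O(\de^\al)+O(\eps)$, which is the claim.

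The main conceptual point, rather than an obstacle, is recognizing the structure of the ansatz. Killing the logarithmic tail of $w^\al_\de$ on $\partial\Omega$ requires only the regular part $4\pi\al H(\cdot,0)$, but on $\partial B(0,\eps)$ the constant boundary value of $w^\al_\de$ is not matched by the additive constant $-\ln(2\al^2\de^\al)$; the only harmonic function on $\Omega_\eps$ that can absorb this mismatch without spoiling the outer boundary adjustment is $G(\cdot,0)$, and the specific coefficient $\gamma^\al_{\de,\eps}$ in \eqref{gamma} is then forced by the requirement of cancellation at the inner boundary.
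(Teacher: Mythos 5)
Your proof is correct and follows essentially the same route as the paper: both define the harmonic remainder $P_\eps w^\al_\de-\Phi$, bound it by $O(\de^\al)$ on $\partial\Omega$ and by $O(\eps)$ on $\partial B(0,\eps)$ (using $\gamma^\al_{\de,\eps}=O(1)$ under $\de\sim d\eps^\beta$), and conclude by the maximum principle. Your version is, if anything, slightly more explicit about why the coefficient \eqref{gamma} is forced by the cancellation on the inner boundary.
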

\begin{proof}
The function
$$\rho(x):=P _\eps w^\al_\de(x)-\[w^\al_\de(x)-\ln2\al^2\de^\al+4\pi\al H(x,0)-\gamma^\al_{\de,\eps} G(x,0)\] $$
solves
$-\Delta \rho=0$ in $\Omega_\eps.$ Moreover, it is easy to check that
$$\rho(x)=- \ln{1\over \(\de^\al+|x|^\al\)^2}+\ln{1\over  |x|^{2\al}}=O\(\de^\al\)\ \hbox{if}\ x\in\partial\Omega$$
and
$$\rho(x)= O\(\eps\) \ \hbox{if}\ x\in\partial B(0,\eps).$$
since the assumption on $\de$ ensures that  $\gamma^\al_{\de,\eps}=O(1).$
Therefore, the claim follows by the maximum principle.

\end{proof}

  We look for a  solution
to \eqref{p} as
\begin{equation}\label{ans}
u_\eps:=P _\eps w^\al_\de(x)+{\phi}_\eps(x), \end{equation}
  where \begin{equation}\label{alfa}
 \al ={\lambda \over 4\pi}
\end{equation}
and the concentration parameter  are chosen so that (see \eqref{gamma})
\begin{equation}\label{delta1}\gamma^\al_{\de,\eps}=2\pi (\al-2),\end{equation}  namely
\begin{equation}\label{delta2}
2\ln  \(\de^\al+\eps^\al\) -(\al-2)\ln\eps=2\pi(\al+2)H(0,0).
\end{equation}
Let us point out that by \eqref{delta2} we deduce the rate of the concentration parameter with respect to the size of the hole
\begin{equation}\label{delta0}
\de\sim e^{{\al+2\over\al}\pi H(0,0)}\eps^{ {\al-2\over2\al} }.
\end{equation}
We point out that the choice of the $\alpha$ and $\delta$'s made in  \eqref{alfa} and \eqref{delta1}
is motivated by the need that the error term defined in \eqref{rla} goes to zero as $\eps$ goes to zero.
In particular, the choice of $\delta$ made in \eqref{delta1} together with Lemma \ref{pro-exp} ensure
that
\begin{align}\label{es1}
P _\eps w^\al_\de(x)=&w^\al_\de(x)-\ln2\al^2\de^\al+4\pi \al  H(x,0)-2\pi(\al-2)G(x,0)  +O\(\eps^{\sigma_1}\)\nonumber\\
=&w^\al_\de(x)-\ln2\al^2\de^\al+2\pi(\al+2) H(x,0)+(\al-2)\ln|x|  +O\(\eps^{\sigma_1}\),
\end{align}
where $ \sigma_1:=\min\left\{{\al-2\over2},1\right\}.$

 In particular, it holds true that
\begin{equation}\label{es11}
P _\eps w^\al_\de(x)= 2\pi(\al+2) G(x,0) +o\( 1\)\ \hbox{uniformly on compact sets of $\Omega\setminus\{0\}$.}
 \end{equation}

 The rest term $ \phi_\eps $ belongs to the space $H $ defined as follows.
\begin{equation}
\label{hek}
H:=\left\{
\begin{aligned}&\mathrm{H}^1_0(\Omega_\eps)\  &\hbox{if}\  {\lambda }\not \in 4\pi\mathbb N\\
&\left\{\phi\in  \mathrm{H}^1_0(\Omega_\eps)\ :\ \phi(x)=\phi(\Re_\al x),\   x\in\Omega_\eps \right\}\ &\hbox{if}\  {\lambda } \in 4\pi\mathbb N
\end{aligned}
\right.
 \end{equation}
where $\Re_\al$ is defined in \eqref{ksym}).

\medskip

In the following, we will denote by
$$\|u\|_p:=\(\int\limits_{\Omega_\eps} |u(x)|^pdx\)^{1\over p}\quad \hbox{and}\quad\|u\| :=\(\int\limits_{\Omega_\eps}|\nabla u(x)|^2dx\)^{1\over 2}$$
the usual norms in the Banach spaces $\mathrm{L}^p(\Omega_\eps)$ and $\mathrm{H}^1_0(\Omega_\eps),$ respectively.

\section{Estimate of the error term}\label{due}

In this section we will estimate the following error term

\begin{align}\label{rla}
\mathcal{R}_\eps (x):= \Delta P _\eps w^\al_\de(x) +\lambda  { e^{P _\eps w^\al_\de(x) }\over\int\limits_{\Omega_\eps}e^{P _\eps w^\al_\de(x)}dx}.
\end{align}

 \begin{lemma}\label{errore} Let $  \mathcal{R}_\eps$ as in \eqref{rla}.
There exists   $p_0>1$ and $\eps_0>0$ such that for any $p\in(1,p_0 )$ and $\eps\in(0,\eps_0)$ we have
\begin{equation}\label{rla1}\| \mathcal{R}_\eps\|_{p}= O\({\eps^{\sigma_p}}\)\ \hbox{where}\ \sigma_p:={(\al-2)(2-p)\over2\al  p}.\end{equation}
\end{lemma}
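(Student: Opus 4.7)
The strategy is to combine expansion (\ref{es1}) for $P_\eps w^\al_\de$ with the singular Liouville equation (\ref{plim}) so that the leading part of $\Delta P_\eps w^\al_\de$ exactly cancels the leading part of $\la\,e^{P_\eps w^\al_\de}/\int_{\Omega_\eps} e^{P_\eps w^\al_\de}\,dx$, leaving a remainder that is pointwise small near the concentration point and integrable after rescaling. First, by (\ref{pro}) and (\ref{plim}) we have $\Delta P_\eps w^\al_\de = \Delta w^\al_\de = -|x|^{\al-2}e^{w^\al_\de}$ in $\Omega_\eps$. Exponentiating (\ref{es1}) and using the explicit form $e^{w^\al_\de}=2\al^2\de^\al/(\de^\al+|x|^\al)^2$ gives
$$e^{P_\eps w^\al_\de(x)} = \frac{|x|^{\al-2}\,e^{2\pi(\al+2)H(x,0)}}{(\de^\al+|x|^\al)^2}\bigl(1+O(\eps^{\sigma_1})\bigr).$$
The change of variables $x=\de y$ (together with $\eps/\de\to 0$, which follows from (\ref{delta0}) since $\al>2$) and dominated convergence then yield
$$\int\limits_{\Omega_\eps} e^{P_\eps w^\al_\de}\,dx = \frac{2\pi}{\al}\,e^{2\pi(\al+2)H(0,0)}\,\de^{-\al}\bigl(1+o(1)\bigr).$$
Because $\la=4\pi\al$, the normalising factor of the nonlinearity matches $|x|^{\al-2}e^{w^\al_\de}$ exactly apart from the smooth multiplier $e^{2\pi(\al+2)[H(x,0)-H(0,0)]}=1+O(|x|)$ and the $O(\eps^{\sigma_1})$ correction. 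Cancelling against $-\Delta P_\eps w^\al_\de$ produces the pointwise bound
$$|\mathcal{R}_\eps(x)|\le C\,\frac{\de^\al\,|x|^{\al-2}}{(\de^\al+|x|^\al)^2}\,\bigl(|x|+\eps^{\sigma_1}\bigr),\quad x\in\Omega_\eps.$$

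Next, raising this to the $p$-th power and performing the rescaling $x=\de y$, for which $dx=\de^2\,dy$ and $\de^\al+|x|^\al=\de^\al(1+|y|^\al)$, I obtain
$$\|\mathcal{R}_\eps\|_p^p \le C\,\de^{2-p}\!\int\limits_{\rr^2}\!\frac{|y|^{(\al-1)p}}{(1+|y|^\al)^{2p}}\,dy + C\,\eps^{\sigma_1 p}\,\de^{2-2p}\!\int\limits_{\rr^2}\!\frac{|y|^{(\al-2)p}}{(1+|y|^\al)^{2p}}\,dy.$$
Since $\al>2$, both integrands are integrable at the origin for any $p>0$, while integrability at infinity amounts to $(\al+1)p>2$, respectively $(\al+2)p>2$; both hold for $p>2/(\al+1)$, a threshold below $1$, so there is some $p_0>1$ for which this estimate is valid whenever $p\in(1,p_0)$. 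Inserting $\de\sim\eps^{(\al-2)/(2\al)}$ from (\ref{delta0}), the first summand becomes $\eps^{p\sigma_p}$, while the second equals $\eps^{p\sigma_p}\cdot\eps^{p[\sigma_1-(\al-2)/(2\al)]}$ and is therefore of higher order because a short arithmetic verification shows that $\sigma_1=\min\{(\al-2)/2,1\}>(\al-2)/(2\al)$ for every $\al>2$.

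The main technical obstacle is the pointwise cancellation itself, which relies crucially on the judicious choice (\ref{delta2}) of $\de$: exponentiating the $(\al-2)\ln|x|$ term in expansion (\ref{es1}) reproduces precisely the singular weight $|x|^{\al-2}$ of (\ref{plim}), so that numerator, denominator, and $-\Delta P_\eps w^\al_\de$ line up and the remainder picks up only a smooth $O(|x|)$ factor, coming from the difference $H(x,0)-H(0,0)$, rather than an unstructured $O(1)$ error. A subsidiary delicate point is the bookkeeping of the $O(\eps^{\sigma_1})$ correction from (\ref{es1}) as it propagates through the quotient, since the verification that it produces a strictly higher-order term is what pins down the exponent $\sigma_p$ in (\ref{rla1}).
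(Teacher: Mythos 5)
Your proof is correct and follows essentially the same route as the paper: exploit the choice of $\de$ in \eqref{delta2} so that exponentiating \eqref{es1} reproduces the singular weight $|x|^{\al-2}$, cancel against $-\Delta P_\eps w^\al_\de=|x|^{\al-2}e^{w^\al_\de}$ using $\la=4\pi\al$, and estimate the two resulting remainder pieces in $L^p$ after the rescaling $x=\de y$. If anything, your bookkeeping of the second term is slightly more careful than the paper's displayed computation (you retain the $\de^{2-2p}$ factor and verify $\sigma_1>(\al-2)/(2\al)$ explicitly), though you should quantify the $o(1)$ in your expansion of $\int_{\Omega_\eps}e^{P_\eps w^\al_\de}$ as a positive power of $\eps$ (as the paper does with $\sigma_2$) so that its contribution to the pointwise remainder is demonstrably of order $O(\de^{2-p})$ or better after integration.
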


\begin{proof}

 Now, using \eqref{es1} we can compute
 \begin{align}\label{es2}
& \int\limits_{\Omega_\eps}e^{P _\eps w^\al_\de(x)}dx=\int\limits_{\Omega_\eps}{|x|^{\al-2}\over \(\de^\al+|x|^\al\)^2}e^{2\pi(\al+2) H(x,0) +O\( \eps^{\sigma_1}\)}dx\nonumber\\
&
=\int\limits_{\Omega_\eps}{|x|^{\al-2}\over \(\de^\al+|x|^\al\)^2}e^{2\pi(\al+2) H(0,0) +O\(|x|\)+O\( \eps^{\sigma_1}\)}dx\nonumber\\
&
=\int\limits_{\Omega_\eps}{|x|^{\al-2}\over \(\de^\al+|x|^\al\)^2}e^{2\pi(\al+2) H(0,0)}dx +O\(  \int\limits_{\Omega_\eps}{|x|^{\al-2}\over \(\de^\al+|x|^\al\)^2}\(|x|+\eps^{\sigma_1}\) dx\)  \nonumber\\
&\quad\hbox{(we scale $x=\de y$)}\nonumber\\
&
={1\over\de^\al}\int\limits_{\Omega_\eps\over\de}{|y|^{\al-2}\over \(1+|y|^\al\)^2}e^{2\pi(\al+2) H(0,0)}dy +O\(  {1\over\de^\al} \int\limits_{\Omega_\eps\over\de}{|y|^{\al-2}\over \(1+|y|^\al\)^2}\(\de|y|+\eps^{\sigma_1}\) dy\)  \nonumber\\
&
={1\over\de^\al}\({2\pi\over\al}e^{2\pi(\al+2) H(0,0)}+O\(\eps^{\sigma_2}\)\)
 \end{align}
where $\sigma_2:=\min\left\{{(\al+1)(\al-2)\over2},{\al+2\over2}\right\}.$
 Indeed
 \begin{align}
 \int\limits_{\Omega_\eps\over\de}{|y|^{\al-2}\over \(1+|y|^\al\)^2}dy &=\int\limits_{\rr^2}{|y|^{\al-2}\over \(1+|y|^\al\)^2} dy
  -\int\limits_{\rr^2\setminus{\Omega \over\de}}{|y|^{\al-2}\over \(1+|y|^\al\)^2}  +  \int\limits_{B(0,\eps/\de)}{|y|^{\al-2}\over \(1+|y|^\al\)^2} dy \nonumber\\ &={2\pi\over\al}+O\(\de^{\al+1}\)+O\(\({\eps\over\de}\)^2\).
  \end{align}
By \eqref{es2} we deduce that
 \begin{align}\label{es3}
& {1\over\int\limits_{\Omega_\eps}e^{P _\eps w^\al_\de(x)}dx}= { \de^\al}\({\al\over2\pi}e^{-2\pi(\al+2) H(0,0)}+O\(\eps^{\sigma_2}\)\)  .\end{align}

Therefore, we can compute
 \begin{align*}
&R_\eps(x)=-|x|^{\al-2}e^{w^\al_\de(x)}+{\la\over\int\limits_{\Omega_\eps}e^{P _\eps w^\al_\de(x)}dx}e^{P _\eps w^\al_\de(x)}\ \hbox{(we use \eqref{es1})}\\ &=
|x|^{\al-2}e^{w^\al_\de(x)}\[-1+{\la\over 2\al^2\de^\al\int\limits_{\Omega_\eps}e^{P _\eps w^\al_\de(x)}dx} e^{2\pi(\al+2)H(x,0)+O\(\eps^{\sigma_1}\)}\]\ \hbox{(we use \eqref{es3})}\\ &=|x|^{\al-2}e^{w^\al_\de(x)}\[-1+{\la\over 4\pi\al }\(1+O\(\eps^{\sigma_2}\)\) e^{2\pi(\al+2)\[H(x,0)- H(0,0)\]+O\(\eps^{\sigma_1}\)}\]
\\ &\qquad \hbox{(we use the mean value theorem and the choice of $\al$ in \eqref{alfa})}\\ &=|x|^{\al-2}e^{w^\al_\de(x)}\left\{-1+{\la\over 4\pi\al }\[1+O\(\eps^{\sigma_2}\)\] \[ 1+O\(|x|\)+O\(\eps^{\sigma_1}\) \]\right\}\\ &\qquad \hbox{(we use the choice of $\alpha$ made in \eqref{alfa})}
\\ & =|x|^{\al-2}e^{w^\al_\de(x)}\[O\(|x|\)+O\(\eps^{\sigma_1}\) \]\end{align*}
because  $   \sigma_1=\min\left\{\sigma_1,\sigma_2\right\}.$
Finally, we get
 \begin{align*}
&\int\limits_{\Omega_\eps}\left|R_\eps(x)\right|^pdx=O\(\int\limits_{\Omega_\eps}\({|x|^{\al-1} \over \(\de^\al+|x|^\al\)^2} \)^p dx\)+O\( \int\limits_{\Omega_\eps}\(\eps^{\sigma_1}{|x|^{\al-2} \over \(\de^\al+|x|^\al\)^2} \)^p dx\)\\ &\qquad \hbox{(we scale $x=\de y$)}\\ &=O\(\de^{2-p}\int\limits_{\rr^2}\(|y|^{\al-1} \over \(1+|y|^\al\)^2 \)^p dx\)+O\( \eps^{p\sigma_1}\int\limits_{\rr^2}\(|y|^{\al-2} \over \(1+|y|^\al\)^2 \)^p dx\)\\ &\qquad (\hbox{we use \eqref{delta0} and we take $p$ close enough to 1})\\ &
=O\(\eps^{(\al-2)(2-p)\over2\al}\)+O\( \eps^{p\sigma_1}\)=O\(\eps^{(\al-2)(2-p)\over2\al}\)  ,
\end{align*}
  because ${(\al-2)(2-p)\over2\al}=\min\left\{{(\al-2)(2-p)\over2\al}, p\sigma_1\right\}$ if $p$ is close enough to 1.
 That proves our claim.

\end{proof}

\section{The linear theory}\label{tre}

It is useful to introduce   the Banach spaces
\begin{equation}\label{ljs}
\mathrm{L}_\alpha(\rr^2):=\left\{u \in {\rm W}^{1,2}_{loc}(\rr^2)\ :\  \left\|{|y|^{\alpha-2\over 2}\over 1+|y|^\alpha}u\right\|_{\mathrm{L}^2(\rr^2)}<+\infty\right\}\end{equation}
 and
\begin{equation}\label{hjs}\mathrm{H}_\alpha(\rr^2):=\left\{u\in {\rm W}^{1,2}_{loc}(\rr^2) \ :\ \|\nabla u\|_{\mathrm{L}^2(\rr^2)}+\left\|{|y|^{\alpha-2\over 2}\over 1+|y|^\alpha}u\right\|_{\mathrm{L}^2(\rr^2)}<+\infty\right\},\end{equation}
 endowed with the norms
$$\|u\|_{\mathrm{L}_\alpha}:= \left\|{|y|^{\alpha-2\over 2}\over 1+|y|^\alpha}u\right\|_{\mathrm{L}^2(\rr^2)}\ \hbox{and}\ \|u\|_{\mathrm{H}_\alpha}:= \(\|\nabla u\|^2_{\mathrm{L}^2(\rr^2)}+\left\|{|y|^{\alpha-2\over 2}\over 1+|y|^\alpha}u\right\|^2_{\mathrm{L}^2(\rr^2)}\)^{1/2}.$$
It is important to point out the compactness of the embedding $i_\al:\mathrm{H}_\alpha(\rr^2)\hookrightarrow\mathrm{L}_\alpha(\rr^2)$
  (see, for example, \cite{gp}).

Let us consider the linear operator
\begin{align}\label{lla}&\mathcal{L}_{\eps}(\boldsymbol\phi ):=  -\Delta \phi-\la{e^{P _\eps w^\al_\de} \over\int\limits_{\Omega_\eps}e^{P _\eps w^\al_\de(x)}dx}\phi
+\la{e^{P _\eps w^\al_\de}\over (\int\limits_{\Omega_\eps}e^{P _\eps w^\al_\de(x)}dx )^2} \int\limits_{\Omega_\eps}e^{P _\eps w^\al_\de(x)}\phi(x) dx \end{align}

Let us   study  the invertibility of  the linearized operator $\mathcal{L}_{\eps}.$

\begin{prop}\label{inv}
For any $p>1$ there exists $\eps_0>0$ and $c>0$ such that for any $\eps \in(0,\eps_0)$ and for any $ \psi\in  \mathrm{L}^{p}(\Omega_{\eps}) $ there exists a unique
$ \phi\in  \mathrm{W}^{2, 2}(\Omega_{\eps}) \cap  H $ solution of
$$ \mathcal{L}_{\eps}( \phi )= \psi\ \hbox{in}\ \Omega_{\eps},\  \phi=0\ \hbox{on}\ \partial\Omega_{\eps},
$$
which satisfies $$\| \phi\| \leq c |\ln\eps|  \|\psi\|_p .$$
\end{prop}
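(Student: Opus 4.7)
My plan is a standard blow-up and contradiction argument, with a capacity analysis specific to pierced domains providing the $|\ln\eps|$ factor. Suppose the a priori bound fails, so there exist sequences $\eps_n \downarrow 0$, $\phi_n \in H \cap W^{2,2}(\Omega_{\eps_n})$, and $\psi_n \in L^p(\Omega_{\eps_n})$ satisfying $\mathcal{L}_{\eps_n}\phi_n = \psi_n$, $\|\phi_n\| = 1$, and $|\ln\eps_n|\,\|\psi_n\|_p \to 0$. Choose $\delta_n$ as in \eqref{delta1} and set $\hat\phi_n(y) := \phi_n(\delta_n y)$ on $\hat\Omega_n := \delta_n^{-1}\Omega_{\eps_n}$. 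By the conformal invariance of the Dirichlet energy in two dimensions, $\|\nabla\hat\phi_n\|_{L^2(\hat\Omega_n)} = 1$, and using Lemma \ref{pro-exp} together with \eqref{es2}--\eqref{es3}, the coefficient $\lambda e^{P_\eps w^\alpha_\delta}/\int e^{P_\eps w^\alpha_\delta}$ rescales to the weight $W_\alpha(y) := 2\alpha^2 |y|^{\alpha-2}/(1+|y|^\alpha)^2$ modulo errors vanishing in $L^p_{\rm loc}(\rr^2)$, while the scaled datum has $L^p$-norm bounded by $C\delta_n^{2-2/p}\|\psi_n\|_p = o(1)$. Testing the scaled equation against $\hat\phi_n$ and using that $\int_{\rr^2} W_\alpha = \lambda$ yields uniform control in the weighted space $\mathrm{H}_\alpha(\rr^2)$ defined in \eqref{hjs}.

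By the compactness of the embedding $\mathrm{H}_\alpha \hookrightarrow \mathrm{L}_\alpha$, I extract $\hat\phi_n \rightharpoonup \hat\phi_\infty$ weakly in $\mathrm{H}_\alpha$ and strongly in $\mathrm{L}_\alpha$, where $\hat\phi_\infty$ weakly solves
\begin{equation*}
-\Delta\hat\phi_\infty = W_\alpha(y)\bigl(\hat\phi_\infty - c_\infty\bigr) \quad\text{on }\rr^2,
\end{equation*}
with $c_\infty$ the limit of the weighted averages. The classification of finite-energy solutions of this linearized singular Liouville equation yields a one-dimensional kernel when $\alpha \notin 2\mathbb{N}$, spanned by $Z_0(y) := (|y|^\alpha - 1)/(|y|^\alpha + 1)$ (the infinitesimal generator of the $\delta$-scaling of $w^\alpha_\delta$). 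In the resonant case $\alpha = 2\kappa$, two further kernel elements appear from the translation symmetries of the regular Liouville equation, but these are not $\Re_\kappa$-invariant and hence disappear when we restrict to $H$. Therefore in every case $\hat\phi_\infty = \beta Z_0$ for some $\beta \in \rr$.

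The delicate step is to exclude $\beta \neq 0$, and this is where the logarithmic factor is born. Since $Z_0(0) \neq 0$ while $\hat\phi_n$ vanishes on the shrinking circle $\partial B(0,\eps_n/\delta_n)$, the $Z_0$-mode is not ruled out by the blow-up alone and must be detected by a global test. I would introduce the harmonic capacitary potential $\theta_n$ of $B(0,\eps_n)$ in $\Omega$, equal to $1$ on $\partial B(0,\eps_n)$ and $0$ on $\partial\Omega$, for which $\|\theta_n\|^2 = 2\pi/|\ln\eps_n| + O(|\ln\eps_n|^{-2})$. Decomposing $\phi_n = \beta_n \theta_n + \phi_n^{\sharp}$, testing $\mathcal{L}_{\eps_n}\phi_n = \psi_n$ against $\theta_n$, and tracking leading orders produces an identity of the form
\begin{equation*}
\frac{\beta^2}{|\ln\eps_n|}(1+o(1)) = \|\psi_n\|_p\, O(1) + o\!\left(\frac{1}{|\ln\eps_n|}\right),
\end{equation*}
which after multiplying through by $|\ln\eps_n|$ and invoking $|\ln\eps_n|\|\psi_n\|_p \to 0$ forces $\beta = 0$. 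Combined with strong $\mathrm{L}_\alpha$-convergence, local elliptic regularity near $0$, and the negligibility of the potential outside a neighborhood of $0$, one concludes $\phi_n \to 0$ in $H^1(\Omega_{\eps_n})$, contradicting $\|\phi_n\| = 1$. Existence and uniqueness of $\phi$ for any given $\psi$ then follow from the self-adjointness of $\mathcal{L}_\eps$ and the Fredholm alternative in the symmetric space $H$, since the a priori bound excludes any nontrivial element of $\ker \mathcal{L}_\eps \cap H$. The main obstacle is precisely the capacity-based isolation of the $Z_0$-component: it is invisible at the bubble scale $\delta_n$ and only detectable through the logarithmic capacity of the hole, which is the source of the $|\ln\eps|$ weight in the final bound.
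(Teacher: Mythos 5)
Your overall strategy coincides with the paper's own proof: argue by contradiction, rescale at the concentration scale $\de_n$, use the compact embedding $\mathrm{H}_\al(\rr^2)\hookrightarrow\mathrm{L}_\al(\rr^2)$ and the classification of Theorem \ref{esposito} (with the $\kappa$-symmetry removing the translational kernel in the resonant case), detect the $Z$-mode with a test function of Green's-function/capacitary type (which is where $|\ln\eps|$ enters), and invoke the Fredholm alternative for existence. However, two steps are asserted rather than proved, and one of them fails as written.

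First, the claim that ``testing the scaled equation against $\hat\phi_n$ \dots{} yields uniform control in $\mathrm{H}_\al(\rr^2)$'' is not correct as stated. Testing gives
\begin{equation*}
1+o(1)=\int\limits_{\hat\Omega_n}\pi(y)\hat\phi_n^2(y)\,dy-\frac{1}{\la}\Bigl(\int\limits_{\hat\Omega_n}\pi(y)\hat\phi_n(y)\,dy\Bigr)^{2},
\end{equation*}
and the right-hand side is a \emph{degenerate} nonnegative quadratic form: it vanishes on constants because $\int_{\rr^2}\pi=4\pi\al=\la$. Hence this identity does not bound $\|\hat\phi_n\|_{\mathrm{L}_\al}$. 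The paper closes this hole with a separate normalization argument (assume $\|\hat\phi_n\|_{\mathrm{L}_\al}\to\infty$, divide by the norm, and show the normalized weak limit must be the constant $0$ or $1$, contradicting $\frac{1}{\la}(\int\pi\hat\phi^*)^2=1$); you need to supply this or an equivalent step before extracting the limit $\hat\phi_\infty$.

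Second, the decomposition $\phi_n=\beta_n\theta_n+\phi_n^\sharp$ and the resulting identity are not coherent as written: $\phi_n\in H^1_0(\Omega_{\eps_n})$ vanishes on $\partial B(0,\eps_n)$ while $\theta_n\equiv1$ there, and since $\theta_n$ is harmonic in $\Omega_{\eps_n}$ one has $\int\nabla\phi_n\cdot\nabla\theta_n=0$ identically, so no term $\beta_n^2\|\theta_n\|^2/|\ln\eps_n|$ is produced by testing. What testing against $\theta_n\approx G(\cdot,0)/\gamma_n$ actually yields, after multiplying by $\gamma_n\sim|\ln\eps_n|$ and using $|\ln\eps_n|\|\psi_n\|_p\to0$, is the relation \eqref{sigma}, which is \emph{linear} in the amplitude $a$ of the $Z$-mode: rescaling and using $\int_{\rr^2}\ln|y|\,\pi(y)Z(y)\,dy=-4\pi$ gives $-2a=o(1)$, hence $a=0$. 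This is precisely the paper's Step~2, performed with the test function $\gamma_nP_nZ_n$, whose nonconstant harmonic correction $-G(\cdot,0)/\gamma_n$ is (up to sign and normalization) your capacitary potential, the remaining piece $\gamma_n(Z_n+1)$ being shown negligible in \eqref{cru3.1}--\eqref{cru3.2}. So the mechanism you identify is the right one and the hypothesis $|\ln\eps_n|\|\psi_n\|_p\to0$ is indeed calibrated exactly to it, but the quadratic identity you state is not what the computation produces and the argument must be rewritten along the lines above to be a proof.
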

\begin{proof}
We argue by contradiction. Assume there exist $p>1,$ sequences $\eps_n\to0,$ $\psi_n\in \mathrm{L}^{p}(\Omega_n)$ and $\phi_n\in \mathrm{W}^{2, 2}(\Omega_n)$
such that
\begin{equation}\label{inv1}
-\Delta \phi_n-\la{e^{P _n w_n} \over\int\limits_{\Omega_\eps}e^{P _n w_n(x)}dx}\phi_n
+\la{e^{P _n w_n}\over (\int\limits_{\Omega_n}e^{P _n w_n(x)}dx )^2} \int\limits_{\Omega_n}e^{P _n w_n(x)}\phi_n(x) dx =\psi_n \ \hbox{in}\ \Omega_n,\ \phi_n=0\ \hbox{on}\ \partial\Omega_n,
\end{equation}
where $\Omega_n:=\Omega_{\eps_n},$ $P_n:=P_{\eps_n},$ $w_n:=w^\al_{\de_n},$ the parameters $\de_n$ being in \eqref{delta2} and
\begin{equation}\label{inv2}
\|\phi_n\| =1\quad\hbox{and}\quad    |\ln\eps_n|  \|\psi_n\|_p\to0.\end{equation}

We set $\pi_n(x):=|x|^{\al-2}e^{  w_n(x)}$ and   rewrite \eqref{inv1} by using \eqref{rla}:
\begin{equation}\label{inv1.1}
-\Delta \phi_n-\pi_n\phi_n+{1\over\la} \pi_n\int\limits_{\Omega_n}\pi_n\phi_n(x)dx =\psi_n+\rho_n \ \hbox{in}\ \Omega_n,\ \phi_n=0\ \hbox{on}\ \partial\Omega_n,
\end{equation}
where
$$\rho_n(x):=R_n(x)\phi_n(x)+{1\over\la}\[\(R_n(x)+\pi_n(x)\)\int\limits_{\Omega_n}R_n(x)\phi_n(x)dx+R_n\int\limits_{\Omega_n}\pi_n(x)\phi_n(x)dx\].$$
By Lemma \ref{errore} and \eqref{inv2} we deduce that
\begin{equation}\label{erre}
\|\rho_n\|_p=O\(\eps^\sigma\)\quad\hbox{for some }\ \sigma>0.
\end{equation}
We define
 $  \hat\phi _n(y):=\phi_n\({\de }_n y\) $ with $y\in \hat\Omega _n:={\Omega\over \de_n }.$
It solves
\begin{equation}\label{inv1.2}
-\Delta \hat\phi_n-\pi \hat\phi_n+{1\over\la} \pi \int\limits_{\hat \Omega_n}\pi(y)\hat\phi_n(y)dy =\de_n^2\(\psi_n\({\de }_n y\) +\rho_n\({\de }_n y\) \) \ \hbox{in}\ \hat\Omega_n,\ \hat\phi_n=0\ \hbox{on}\ \partial\hat\Omega_n,
\end{equation}
 where $\pi(y):=2\al ^2 {|y|^{\al -2}\over(1+|y|^{\al })^2}.$

{\em Step 1: we will show that $\hat\phi_n \to \hat\phi$ weakly in $\mathrm{H}_{\al }(\rr^2)$ and strongly in $\mathrm{L}_{\al }(\rr^2) $  with
\begin{equation}\label{step1}  \hat\phi(y)-{1\over\la}\int\limits_{\rr^2}\pi(y)\hat\phi(y)dy =a {1-|y|^{\al }\over1+|y|^{\al }}\ \hbox{  for some $ a\in\rr.$}  \end{equation}}

First of all we claim that each $\hat\phi_n$ is bounded in the space $\mathrm{H}_{\al }(\rr^2)$
defined in \eqref{hjs}.
We remark that by scaling
$$\int\limits_{\hat \Omega_n}|\nabla\hat \phi_n(y)|^2dy=\de^2_n\int\limits_{\hat \Omega_n}|\(\nabla\phi \)(\de_n y)|^2dy=\int\limits_{\Omega_n }|\nabla\phi (x)|^2dx=1.$$
Assume by contradiction that
$$  \|\hat\phi_n\|_{\mathrm{L}_{\al }(\rr^2)}^2=\int\limits_{\hat \Omega_n}\pi(y) \hat \phi_n(y) ^2dy= \int\limits_{\Omega_n }\pi_n(x) \phi^2_n (x)dx\to+\infty\quad\hbox{as}\ n\to+\infty.$$
Then, if we introduce the normalized sequence $ \hat\phi^*_n :={\hat\phi_n\over\|\hat\phi_n\|_{\mathrm{L}_{\al }(\rr^2)}}$ we have (up to a subsequence) that
$$\hat\phi^*_n\to\hat\phi^*\quad\hbox{weakly in   $\mathrm{L}_{\al }(\rr^2)$}$$
If we multiply  \eqref{inv1.2} by $\hat\phi_n$ we deduce
 \begin{align} \label{new1} 0\le\int\limits_{\hat\Omega_n }\pi(y)\hat\phi_n^2 (y)dy-{1\over\la}\(\int\limits_{\hat\Omega_n }\pi(y)\hat\phi_n  (y)dy\)^2=1+o(1). \end{align}
The R.H.S follows by   \eqref{inv2} and \eqref{erre}, while the L.H.S. follows by H\"older's inequality and the choice of $\al$ in \eqref{alfa}, namely
$${1\over\la}\(\int\limits_{\hat\Omega_n }\pi(y)\hat\phi_n  (y)dy\)^2\le {1\over\la}\(\int\limits_{\hat\Omega_n }\pi(y)  dy\)\(\int\limits_{\hat\Omega_n }\pi(y)\hat\phi_n^2  (y)dy\) =
{4\pi\al\over\la}\(\int\limits_{\hat\Omega_n }\pi(y)\hat\phi_n^2  (y)dy\)
.$$
Next, we divide \eqref{new1} by $\|\hat\phi_n\|_{\mathrm{L}_{\al }(\rr^2)}^2$, we pass to the limit and we get
 \begin{align} \label{new2}{1\over\la}\(\int\limits_{\rr^2}\pi(y)\hat\phi^*  (y)dy\)^2=1,  \end{align}
because the constant function $1\in \mathrm{L}_{\al }(\rr^2)$ and $\hat\phi^*_n\to\hat\phi^*$ weakly in $\mathrm{L}_{\al }(\rr^2)$.
On the other hand, we divide \eqref{inv1.2} by $\|\hat\phi_n\|_{\mathrm{L}_{\al }(\rr^2)}$, we pass to the limit and we get
 \begin{align} \label{new3}-\pi(y)\hat\phi^*  (y)+{1\over\la}\pi(y)\int\limits_{\rr^2}\pi(y)\hat\phi^*  (y)dy=0. \end{align}
(We use the fact that   the constant function $1\in \mathrm{L}_{\al }(\rr^2)$ and $\hat\phi^*_n\to\hat\phi^*$ weakly in $\mathrm{L}_{\al }(\rr^2)$).
Finally, by \eqref{new3} we immediately deduce that $\hat\phi^*$ is a constant function and by the choice of $\al$ in \eqref{alfa}
we get that either $ \hat\phi^*\equiv0$ or $ \hat\phi^*\equiv1,$ which contradicts \eqref{new2}.

Therefore,  each $\hat\phi_n$ is bounded in the space $\mathrm{H}_{\al }(\rr^2)$
defined in \eqref{hjs}  and (up to a subsequence)
$$\hat\phi_n(y)\to \hat\phi\quad \hbox{
weakly in $\mathrm{H}_{\al }(\rr^2)$ and strongly in $\mathrm{L}_{\al }(\rr^2)$}.$$
So we pass to the limit into \eqref{inv1.2}  and we   deduce that $\hat\phi\in H $ (see \eqref{hek}) is a solution to the equation
$$-\Delta \hat\phi=\pi \hat \phi -{1\over\la} \pi\int\limits_{\rr^2}\pi(y)\hat\phi(y)dy\ \hbox{in}\ \rr^2 .$$
Then the function $\phi_0(y):=\hat\phi(y)-{1\over\la} \int\limits_{\rr^2}\pi(y)\hat\phi(y)dy$ is a solution in the space $H $  defined in  \eqref{hek} to the linear problem
 $-\Delta \phi_0=\pi \phi_0$ in $\rr^2.$ By Theorem \ref{esposito}, we get our claim.

\medskip
{\em Step 2: we will show that   $ a=0$  in \eqref{step1}  and then either $\hat\phi\equiv0$ or $\hat\phi\equiv1$ in $\rr^2.$}

 First of all, we introduce the function
$$Z(y):={1-|y|^{\al }\over 1+|y|^{\al } }\ \hbox{and}\
  Z_n(x):=Z\({x\over\de_n}\)={\de_n^{\al }-|x|^{\al }\over \de_n^{\al }+|x|^{\al }.}$$ We know that
$Z_n$ solves (see Theorem \ref{esposito})
$$-\Delta Z_n=\pi_n Z_n\quad\hbox{in}\ \rr^2.$$
Let $P_nZ_n$ be its projection onto $\mathrm{H}^1_0(\Omega_n) $ (see \eqref{pro}), i.e.
\begin{equation}\label{cru1}
-\Delta  PZ_n=\pi_n Z_n\ \hbox{in}\ \Omega_n,\ PZ_n=0\ \hbox{on}\ \partial\Omega_n.\end{equation}
By maximum principle (see also Lemma \ref{pro-exp}) we deduce that
\begin{equation}\label{pz}P _n Z_n(x)=Z_n(x)+1- { G(x,0)\over2\pi\ln{1\over\eps_n} +H(0,0)}+O\( \eps^\sigma\),\end{equation}
for some $\sigma>0.$ Set $\gamma_n := { 2\pi\ln{1\over\eps_n} +H(0,0)}.$

We are going to show that
\begin{equation}\label{sigma}
\lim\limits_n\[\int\limits_{\Omega_n}G(x,0)\pi_n(x)\phi_n(x)dx-{1\over\la}\int\limits_{\Omega_n}G(x,0)\pi_n(x) dx\int\limits_{\Omega_n} \pi_n(x)\phi_n(x)dx\]=0.
\end{equation}
  We multiply \eqref{inv1.1} by $\gamma_n  P_nZ_n$ and \eqref{cru1} by $\gamma_n\phi.$ If we subtract the two equations obtained, we get
\begin{align*}
 & \gamma_n\int\limits_{\Omega_n}\pi_n (x)\phi_n (x)\(P_nZ_n(x)-Z_n(x)\)dx-{\gamma_n\over\la}\int\limits_{\Omega_n}\pi_n (x) P_nZ_n(x)dx\int\limits_{\Omega_n}\pi_n (x)\phi_n (x) dx\\ &=
  \gamma_n\int\limits_{\Omega_n}\(\psi_n(x)+\rho_n(x)\)P_nZ_n(x)dx,    \end{align*}
which implies together with   \eqref{pz}
\begin{align*}
 & \gamma_n\int\limits_{\Omega_n}\pi_n (x)\phi_n (x)\[1- { G(x,0)\over\gamma_n}+O\( \eps_n^\sigma\)\]dx\nonumber\\ &-{\gamma_n\over\la}\int\limits_{\Omega_n}\pi_n (x) \[Z_n(x)+1- { G(x,0)\over\gamma_n}+O\( \eps_n^\sigma\)\]dx\int\limits_{\Omega_n}\pi_n (x)\phi_n (x) dx\nonumber\\ &=
  \gamma_n\int\limits_{\Omega_n}\(\psi_n(x)+\rho_n(x)\)P_nZ_n(x)dx   \end{align*}
  and so
\begin{align}\label{cru3}
 & \gamma_n\int\limits_{\Omega_n}\pi_n (x)\phi_n (x)\[1- {1\over\la}\int\limits_{\Omega_n}\pi_n (x) \(Z_n(x)+1\)dx\]\nonumber\\ &
 -\int\limits_{\Omega_n}G(x,0)\pi_n(x)\phi_n(x)dx+{1\over\la}\int\limits_{\Omega_n}G(x,0)\pi_n(x) dx\int\limits_{\Omega_n} \pi_n(x)\phi_n(x)dx=o(1),   \end{align}
because of \eqref{inv2}, \eqref{erre} and the fact that $\gamma_n\sim|\ln\eps_n|.$
Estimate \eqref{sigma} follows by \eqref{cru3} once we   prove that
\begin{align}\label{cru3.1}
\gamma_n\int\limits_{\Omega_n}\pi_n (x)\phi_n (x)\[1- {1\over\la}\int\limits_{\Omega_n}\pi_n (x) \(Z_n(x)+1\)dx\]=o(1).   \end{align}
We have that
\begin{align}\label{cru3.2}
 {1\over\la}\int\limits_{\Omega_n}\pi_n (x) \(Z_n(x)+1\)dx &= {1\over\la}\int\limits_{\hat \Omega_n}\pi  (y) \(Z (y)+1\)dy\nonumber\\ &=
 {1\over\la}\int\limits_{\rr^2}\pi  (y)  \(Z (y)+1\)dy-{1\over\la}\int\limits_{\rr^2\setminus\hat \Omega_n}\pi  (y) \(Z (y)+1\)dy\nonumber\\ &=
 1+O\(\eps_n^{ \al-2}\).\end{align}
Indeed,   a straightforward computation leads to
 \begin{align}\label{ex1}
\int\limits_{\rr^2}\pi  (y)   Z (y)dy=\int\limits_{\rr^2}   2\al ^2{ |y|^{\al -2}\over \(1+|y|^{\al }\)^2}{  1-|y|^{\al } \over 1+|y|^{\al } }dy=0, \end{align}
 \begin{align}\label{ex0}\int\limits_{\rr^2}\pi  (y)    dy=\int\limits_{\rr^2}   2\al ^2{ |y|^{\al -2}\over \(1+|y|^{\al }\)^2} dy=4\pi\al=\la,\end{align}
because of the choice of $\al$ in \eqref{alfa}
and
$$\int\limits_{\rr^2\setminus\hat \Omega_n}\pi  (y) \(Z (y)+1\)dy=O\(\de_n^{2\al}\)=O\(\eps_n^{ \al-2}\)$$
because of   \eqref{delta0}.
Finally, \eqref{cru3.1} follows by \eqref{cru3.2} taking into account \eqref{inv2} and the fact that $\gamma_n\sim|\ln\eps_n|.$

 \medskip

Finally, we can show that $a=0 $ in \eqref{step1}.  By \eqref{sigma} we get
\begin{align*}
\int\limits_{\Omega_n}\[{1\over 2\pi}\ln|x|+H(x,0)\]\pi_n(x)\phi_n(x)dx-{1\over\la}\int\limits_{\Omega_n}\[{1\over 2\pi}\ln|x|+H(x,0)\]\pi_n(x) dx\int\limits_{\Omega_n} \pi_n(x)\phi_n(x)dx=o(1)
\end{align*}
and scaling  $y=\de_n x$ we deduce
\begin{align}\label{cru5}
&{1\over 2\pi}\[\int\limits_{\hat \Omega_n}\ln|y|\pi (y)\hat\phi_n(y)dy-{1\over\la}\int\limits_{\hat \Omega_n}\ln|y|\pi (y)\int\limits_{\hat \Omega_n} \pi (y)\hat\phi_n(y)dy\]\nonumber\\
&=-\({1\over 2\pi}\ln\de_n+H(0,0)\)\int\limits_{\hat \Omega_n} \pi (y)\hat\phi_n(y)dy\(1-{1\over\la}\int\limits_{\hat \Omega_n}\pi(y)dy\)+o(1)\nonumber\\ &
=o(1),
 \end{align}
because using the choice of $\al$ in \eqref{alfa} and arguing as in \eqref{cru3.2} it holds true that
$$1-{1\over\la}\int\limits_{\hat \Omega_n}\pi(y)dy=O\(\de_n^\al\)=O\(\eps_n^{\al-2\over\al}\).$$
On the other hand, by \eqref{step1} we can assume that the weak limit of $\hat\phi_n$ reads as $\hat\phi=aZ+b,$ for some constants $a$ and $b,$ so we pass to the limit on the L.H.S.
  of \eqref{cru5} and we get
\begin{align}\label{cru6}&\int\limits_{\hat \Omega_n}\ln|y|\pi (y)\hat\phi_n(y)dy-{1\over\la}\int\limits_{\hat \Omega_n}\ln|y|\pi (y)\int\limits_{\hat \Omega_n} \pi (y)\hat\phi_n(y)dy\nonumber\\ &=\int\limits_{\rr^2}\ln|y|\pi (y)\(aZ(y)+b\)dy-{1\over\la}\int\limits_{\rr^2}\ln|y|\pi (y)\int\limits_{\rr^2} \pi (y)\(aZ(y)+b\)dy+o(1)\nonumber\\ &
  =a\int\limits_{\rr^2}\ln|y|\pi (y)Z(y)dy+o(1)=-4a \pi +o(1),
  \end{align}
because of \eqref{ex0}, \eqref{ex1} and
\begin{align}\label{ex2}
   &\int\limits_{\rr^2}\ln|y|\pi (y)Z(y)dy=\int\limits_{\rr^2}  2\al ^2{ |y|^{\al -2}\over \(1+|y|^{\al }\)^2}{  1-|y|^{\al } \over 1+|y|^{\al } }\ln|y|dy=-4\pi.
 \end{align}
as a straightforward computation proves. Combining \eqref{cru5} and \eqref{cru6} we deduce that $a=0.$

\medskip
Finally, if $a=0$ by \eqref{step1} using the choice of $\al$ made in \eqref{alfa}, we immediately deduce that $\hat\phi$ is a constant function whose possible values are $0$  or $1.$
That concludes the proof.

{\em Step 3: we will show that a contradiction arises!}

 We multiply equation \eqref{inv1.2} by $\hat\phi_n$  and we get
\begin{align*}
1 &= \int\limits_{\hat \Omega_n} \pi(y)\hat\phi_n^2(y)dy -{1\over \la}\(\int\limits_{\hat \Omega_n} \pi(y)\hat\phi_n (y)dy \)^2+o(1)  \\
 &=\int\limits_{\rr^2} \pi(y)\hat\phi ^2(y)dy -{1\over \la}\(\int\limits_{\rr^2} \pi(y)\hat\phi  (y)dy \)^2 \ \hbox{(because  $\phi_n\to0$   strongly in
 $ \mathrm{L}_{\al }(\rr^2) $)}   \\
 &=0 \ \hbox{ if either $\hat\phi(y)\equiv0$ or $\hat\phi(y)\equiv1$ (because of the choice of $\al$ in \eqref{alfa})}
\end{align*}
and a contradiction arises!
\end{proof}

\section{A contraction mapping argument and the proof of the main theorem}\label{quattro}

First of all we point out  that $P _\eps w^\al_\de+ \phi_{\eps}$ is a solution to \eqref{p} if and only if
  $ \phi_{\eps}$ is a solution of the problem
\begin{equation}\label{L2}
 \mathcal{L}_{\la}( \phi )=\mathcal{N}_{\eps}( \phi )+ +\mathcal{R}_{\eps} \  \hbox{in}\  \Omega_\eps \\\\
\end{equation}
where the error term $\mathcal{R}_\eps$ is defined in \eqref{rla},
the linear operator $\mathcal{L}_{\la}$ is defined in \eqref{lla}   and the higher order term $\mathcal{N}_{ \eps}$ is defined as
\begin{align}\label{nla}
&\mathcal{N}_\eps( \phi ):=   \la\[{e^{P _\eps w^\al_\de+\phi} \over\int\limits_{\Omega_\eps}e^{P _\eps w^\al_\de(x)+\phi(x)}dx}
-  {e^{P _\eps w^\al_\de  } \over\int\limits_{\Omega_\eps}e^{P _\eps w^\al_\de(x) }dx}
- {e^{P _\eps w^\al_\de} \over\int\limits_{\Omega_\eps}e^{P _\eps w^\al_\de(x)}dx}\phi
+ {e^{P _\eps w^\al_\de}\over (\int\limits_{\Omega_\eps}e^{P _\eps w^\al_\de(x)}dx )^2} \int\limits_{\Omega_\eps}e^{P _\eps w^\al_\de(x)}\phi dx \]
\end{align}

\begin{prop}\label{resto}
There exists $p_0>0,$  $\eps_0>0$ and $R_0>0$ such that for any    $p\in(1,p_0),$ $ \eps\in(0,\eps_0)$ and $R\ge R_0$    there exists a unique solution $ \phi_\eps  \in  H $   to the equation
\begin{equation}\label{eqrid}
  \Delta (P _\eps w^\al_\de+\phi_\eps)+ \la {e^{P _\eps w^\al_\de+\phi_\eps} \over\int\limits_{\Omega_\eps}e^{P _\eps w^\al_\de(x)+\phi_\eps(x)}dx}=0\ \hbox{in}\ \Omega_\eps, \phi =0\ \hbox{on}\ \partial\Omega_\eps
 \end{equation}
   and (see \eqref{rla1})
$$\|\phi_\eps\|\le R  \eps^{\sigma_p}|\ln\eps|. $$

\end{prop}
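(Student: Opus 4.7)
By Proposition \ref{inv} the linear operator $\mathcal{L}_\eps$ is invertible on $H$ with $\|\mathcal{L}_\eps^{-1}\psi\|\le c|\ln\eps|\,\|\psi\|_p$, and equation \eqref{eqrid} is equivalent to the reformulation \eqref{L2}, hence to the fixed point problem
$$\phi = T_\eps(\phi) := \mathcal{L}_\eps^{-1}\bigl(\mathcal{N}_\eps(\phi)+\mathcal{R}_\eps\bigr).$$
The strategy is to apply the Banach fixed point theorem to $T_\eps$ on the closed ball
$$B := \{\phi \in H : \|\phi\|\le R\,\eps^{\sigma_p}|\ln\eps|\},$$
with $R\ge R_0$ large and $\eps\le\eps_0$ small.

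The two key estimates I would establish are, for all $\phi,\phi_1,\phi_2\in B$,
$$\|\mathcal{N}_\eps(\phi)\|_p \le C\|\phi\|^2, \qquad \|\mathcal{N}_\eps(\phi_1)-\mathcal{N}_\eps(\phi_2)\|_p \le C(\|\phi_1\|+\|\phi_2\|)\|\phi_1-\phi_2\|.$$
Observe that $\mathcal{N}_\eps(\phi)$, as defined in \eqref{nla}, is precisely the second-order Taylor remainder at $\phi=0$ of the map $\phi\mapsto \la e^{P_\eps w^\al_\de+\phi}/\int_{\Omega_\eps}e^{P_\eps w^\al_\de+\phi}$. Expanding the quotient, $\mathcal{N}_\eps(\phi)$ is a sum of terms schematically of the form
$$\frac{\la e^{P_\eps w^\al_\de}}{\int_{\Omega_\eps}e^{P_\eps w^\al_\de}}(e^\phi-1-\phi), \qquad \frac{\la e^{P_\eps w^\al_\de}}{\bigl(\int_{\Omega_\eps}e^{P_\eps w^\al_\de}\bigr)^2}\Bigl(\int_{\Omega_\eps}e^{P_\eps w^\al_\de}(e^\phi-1)\Bigr)\phi.$$
Using $|e^t-1-t|\le t^2 e^{|t|}$, the Moser-Trudinger inequality (which yields $\|e^{q|\phi|}\|_{L^1(\Omega_\eps)}\le C$ uniformly for bounded $\|\phi\|$ and any $q>0$), H\"older's inequality, and the asymptotic \eqref{es2} for $\int e^{P_\eps w^\al_\de}$, both inequalities follow once $p$ is sufficiently close to $1$ so that the $L^p$-norms of the $e^{P_\eps w^\al_\de}$-weighted factors remain uniformly controlled in $\eps$.

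Granting these estimates, Lemma \ref{errore} combined with Proposition \ref{inv} gives
$$\|T_\eps(\phi)\| \le c|\ln\eps|\bigl(CR^2\eps^{2\sigma_p}|\ln\eps|^2+C\eps^{\sigma_p}\bigr) \le R\,\eps^{\sigma_p}|\ln\eps|$$
whenever $R$ is chosen large (depending on $C$ and $c$) and $\eps$ is small enough, so $T_\eps(B)\subset B$. Likewise,
$$\|T_\eps(\phi_1)-T_\eps(\phi_2)\| \le 2cCR\,\eps^{\sigma_p}|\ln\eps|^2\,\|\phi_1-\phi_2\| = o(1)\|\phi_1-\phi_2\|,$$
so $T_\eps$ is a strict contraction on $B$; the Banach fixed point theorem then furnishes the unique $\phi_\eps\in B$ solving \eqref{eqrid}. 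The main obstacle is the quadratic estimate on $\mathcal{N}_\eps$: the nonlinearity is the exponential of an $H^1_0$-function in two dimensions, normalized by an integral that concentrates at the scale $\de$, so one must combine Moser-Trudinger (to absorb $e^\phi$) with the sharp blow-up asymptotics \eqref{es2} for $\int_{\Omega_\eps}e^{P_\eps w^\al_\de}$, while keeping $p>1$ strictly below a threshold $p_0$ so that all factors involving $e^{pP_\eps w^\al_\de}$ stay integrable uniformly in $\eps$.
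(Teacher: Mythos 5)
Your proposal is correct and follows essentially the same route as the paper: Proposition \ref{inv} turns \eqref{eqrid} into a fixed point problem for $\mathcal{T}_\eps=\mathcal{L}_\eps^{-1}\(\mathcal{N}_\eps(\phi)+\mathcal{R}_\eps\)$ on the ball $\|\phi\|\le R\eps^{\sigma_p}|\ln\eps|$, and the paper's Lemma \ref{B2} supplies exactly the quadratic and Lipschitz bounds on $\mathcal{N}_\eps$ that you postulate (obtained there via a second-order Taylor/mean-value expansion of $f(\phi)=\la e^{P_\eps w^\al_\de+\phi}/\int_{\Omega_\eps}e^{P_\eps w^\al_\de+\phi}$ together with H\"older, rather than your $|e^t-1-t|\le t^2e^{|t|}$ plus Moser--Trudinger, which serves the same purpose). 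One caveat: the constant in your claimed bound $\|\mathcal N_\eps(\phi)\|_p\le C\|\phi\|^2$ is not actually uniform in $\eps$, because the relevant ratio $\|e^{P_\eps w^\al_\de}\|_{pr}/\|e^{P_\eps w^\al_\de}\|_1\sim\de^{2/(pr)-2}$ diverges as $\de\to0$ whenever $pr>1$; however this divergence is only $\eps$ raised to a small negative power (tending to $0$ as $p,r\to1$), so it is absorbed by the factor $\eps^{2\sigma_p}|\ln\eps|^2$ in your final display and the contraction still closes for $p$ close to $1$ --- this bookkeeping is precisely the role of the exponent $\sigma'_p$ in \eqref{B21}--\eqref{B22}.
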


\begin{proof}

 As a consequence of Proposition \ref{inv}, we conclude that $ \phi$
is a solution to \eqref{eqrid} if and only if it is a fixed point
for the operator $ \mathcal{T}_\eps:H\to H,$ defined by
$$\mathcal{T}_\eps( \phi)= \left( \mathcal{L}_\eps\right)^{-1}
\left(\mathcal{N}_\eps( \phi)+  +\mathcal{R}_\eps\right),$$
where $\mathcal{L}_\eps$,  $\mathcal{N}_\eps$ and $\mathcal{R}_\eps$  are defined  in \eqref{rla},    \eqref{nla}  and \eqref{rla}, respectively.

 Let   us introduce the ball $\mathcal B_{\eps,R}:=\left\{ \phi\in H\ :\ \| \phi\|\le R\eps^{\sigma_p}|\ln\eps| \right\}$. We will show that $\mathcal{T}_\eps:\mathcal B_{\eps,R}\to \mathcal B_{\eps,R}$ is a contraction mapping
provided $\eps$ is small enough and $R$ is large enough.

\medskip {\em Let us prove that $\mathcal{T}_\eps$ maps  the ball $\mathcal B_{\eps,R}$ into itself,  i.e.}
\begin{equation}\label{c2.1}
\| \phi\|\le R\eps^{\sigma_p} |\ln\eps|\ \Longrightarrow\
\left\|\mathcal{T}_\eps( \phi)\right\|\le R\eps^{\sigma_p} |\ln\eps|.
\end{equation}

By     Lemma \ref{B2} (where we take $h=\mathcal{N}_\eps(\phi)+ \mathcal{R}_\eps$), by \eqref{B21} and by Lemma  \ref{errore} we deduce that:
\begin{align*}
\left\|\mathcal{T}_\eps( \phi)\right\|&  \le c|\ln\eps|  \left(\left\|\mathcal{N}_\eps ( \phi)\right\|_p + \left\| \mathcal{R}_\eps\right\|_p\right)
 \le c|\ln\eps|\( \eps^{\sigma'_p} \| \phi\|^2+  \eps^{\sigma _p} \)
 \le  R\eps^{\sigma _p} |\ln\eps|
\end{align*}
provided   $p$ is close enough to 1,  $R$  is suitable large and $\eps $  is  small enough. That proves \eqref{c2.1}.

\medskip {\em Let us prove that $\mathcal{T}_\eps$ is a contraction mapping,  i.e. there exists $\ell>1$ such that}
\begin{equation}\label{c2.2}
\|\phi\|\le R\eps^{\sigma_p} |\ln\eps\ \Longrightarrow\
\left\|\mathcal{T}_\eps(\ \phi_1)-\mathcal{T}_\eps(\phi_2)\right\|\le
\ell \|\phi_1-\phi_2\|.
\end{equation}

By     Lemma \ref{B2} (where we take $\psi=\mathcal{N}_\eps( \phi_1 )-\mathcal{N}_\eps( \phi_2 ) $) and by  \eqref{B22}, we deduce that:
\begin{align*}
&\left\|\mathcal{T}_\eps( \phi)\right\|   \le c|\ln\eps| \(\left\|\mathcal{N}_\eps( \phi_1 )-\mathcal{N}_\eps( \phi_2 )\right\|_p
  \)
\le c\eps^{\sigma'_p} |\ln\eps|
  \| \phi_1- \phi_2\| \( \| \phi_1\|+\|  \phi_2\|\)    \le  \ell\| \phi_1- \phi_2\| \end{align*} for some $\ell<1,$
provided   $p$ is close enough to 1,    $R$  is suitable large and $\eps $  is  small enough. That proves \eqref{c2.2}.

\end{proof}

  \begin{lemma}\label{B2}
    There exists $p_0>1$ and $\eps_0>0$   such that for any $p \in(1,p_0),$   $\eps\in(0,\eps_0)$ and $R>0$ we have
  for any $\phi,\phi_1,\phi_2\in\mathcal B_{\eps,R}:=\{\phi\in \mathrm{H}^1_0(\Omega) \ :\ \|\phi\|\le R\eps^{\sigma_p}|\ln\eps|\}$
 \begin{equation}\label{B21}
 \left\|\mathcal N_\eps( \phi)\right\|_p=O\( \eps^{\sigma'_p}\| \phi \|^2\)\end{equation}
  and
    \begin{equation}\label{B22}
\left\|\mathcal N_\eps( \phi_1)-\mathcal N_\eps( \phi_2)\right\|_p=O\(
 \eps^{\sigma'_p}
   \| \phi_1- \phi_2\|\(\| \phi_1\|+\| \phi_2\|\)\) ,\end{equation}
 for some $\sigma'_p>0.$ \end{lemma}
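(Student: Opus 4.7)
I would view $\mathcal{N}_\eps$ as the second-order Taylor remainder at $\phi=0$ of the operator
$f(\phi):=\lambda\,e^{P_\eps w^\al_\de+\phi}/I(\phi)$, where $I(\phi):=\int_{\Omega_\eps}e^{P_\eps w^\al_\de+\phi}$.
The integral form of Taylor's theorem gives
\[
\mathcal{N}_\eps(\phi)=\int_0^1(1-s)\,D^2 f(s\phi)[\phi,\phi]\,ds,
\]
while for the difference one has $\mathcal{N}_\eps(\phi_1)-\mathcal{N}_\eps(\phi_2)=\int_0^1 D\mathcal{N}_\eps(\phi_2+t(\phi_1-\phi_2))[\phi_1-\phi_2]\,dt$. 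A direct computation of $D^2 f$, using the probability density $\mu_{s\phi}:=e^{P_\eps w^\al_\de+s\phi}/I(s\phi)$, produces the pointwise bound
\[
|\mathcal{N}_\eps(\phi)(x)|\;\le\;C\sup_{s\in[0,1]}\mu_{s\phi}(x)\left(\phi(x)^2+\int_{\Omega_\eps}\mu_{s\phi}(y)\phi(y)^2\,dy\right),
\]
together with an analogous bound for $D\mathcal{N}_\eps(\phi)[\psi]$ that is linear in $\psi$ and, modulo an averaging correction, linear in~$\phi$.

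To pass to $L^p$ I would use three ingredients. First, since $\|\phi\|$ is small on $\mathcal{B}_{\eps,R}$, Jensen's inequality gives $I(s\phi)\ge c\,I(0)$ and the Moser--Trudinger inequality yields $\|e^{s\phi}\|_{L^r(\Omega_\eps)}\le C_r$ for every $r<\infty$; hence $\mu_{s\phi}$ is comparable to $\mu_0:=e^{P_\eps w^\al_\de}/I(0)$ in every $L^r$. Second, the expansions \eqref{es1}--\eqref{es3}, combined with the scaling $x=\de y$ and the relation \eqref{delta0}, give
\[
\|\mu_0\|_{L^{pq}(\Omega_\eps)}\;\lesssim\;\de^{2(1-pq)/(pq)},
\]
which is a small negative power of $\eps$ when $p,q$ are close to $1$. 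Third, the two-dimensional Sobolev embedding $\mathrm{H}^1_0\hookrightarrow L^r$ for every $r<\infty$ produces $\|\phi\|_{L^{2pq'}}\le C\|\phi\|$. Combining these via H\"older with conjugate exponents $pq$ and $pq'$,
\[
\|\mu_{s\phi}\phi^2\|_{L^p}\;\le\;\|\mu_{s\phi}\|_{L^{pq}}\|\phi^2\|_{L^{pq'}}\;\le\;C\de^{2(1-pq)/(pq)}\|\phi\|^2,
\]
and the same estimate for the spatial constant $\int\mu_{s\phi}\phi^2$, I obtain $\|\mathcal{N}_\eps(\phi)\|_{L^p}\le C\de^{2(1-pq)/(pq)}\|\phi\|^2$. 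Taking $q$ sufficiently close to $1$ and trading a share of the smallness $\|\phi\|\le R\eps^{\sigma_p}|\ln\eps|$ available on $\mathcal{B}_{\eps,R}$ against the divergent $\de$-factor, one arrives at \eqref{B21} with a residual exponent $\sigma'_p>0$. The estimate \eqref{B22} follows from the same scheme applied to $D\mathcal{N}_\eps(\phi)$, which vanishes at $\phi=0$ and is therefore linear in $\|\phi\|$; integration in $t$ then yields the prefactor $\|\phi_1\|+\|\phi_2\|$.

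\textbf{Main obstacle.} The principal difficulty is that $\|\mu_0\|_{L^{pq}}\to\infty$ as $\eps\to 0$ whenever $pq>1$, so no uniform control of $\|\mathcal{N}_\eps(\phi)\|_p$ by $\|\phi\|^2$ alone is available. Producing a strictly positive $\sigma'_p$ demands a careful coupling of the auxiliary H\"older exponent $q$ (close to $1$) with the $\eps^{\sigma_p}|\ln\eps|$-smallness of $\phi$ on the ball $\mathcal{B}_{\eps,R}$, and this exponent bookkeeping is the most delicate ingredient of the argument.
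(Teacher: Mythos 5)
Your proposal is correct and follows essentially the same route as the paper: expand $f(\phi)=\lambda\, e^{P_\eps w^\al_\de+\phi}/\int_{\Omega_\eps} e^{P_\eps w^\al_\de+\phi}$ to second order, bound $f''$ in $L^p$ by H\"older with exponents close to $1$, and combine the scaling estimates $\|e^{P_\eps w^\al_\de}\|_q=O\big(\de^{2/q-(2+\al)}\big)$ and $\|e^{P_\eps w^\al_\de}\|_1\ge c_0\de^{-\al}$ with the embedding $\mathrm{H}^1_0\hookrightarrow L^r$. One remark: the ``trade'' in your last paragraph is genuinely needed --- the divergent factor $\de^{2(1-pq)/(pq)}$ can only be turned into $\eps^{\sigma'_p}$ by spending one power of the ball bound $\|\phi\|\le R\eps^{\sigma_p}|\ln\eps|$, so the quadratic right-hand sides of \eqref{B21}--\eqref{B22} must be read as estimates on $\mathcal B_{\eps,R}$; the paper performs the same absorption somewhat implicitly in \eqref{ne7}--\eqref{ne9} (where the contribution of $\|e^{P_\eps w^\al_\de}\|_{pr}$ alone is dropped), and your use of Moser--Trudinger to control $e^{s\phi}$ is in fact cleaner than the paper's appeal to $|e^a-1|\le|a|$.
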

\begin{proof}

    Since   \eqref{B21} follows by   \eqref{B22} choosing
$\phi_2=0 ,$ we only prove (\ref{B22}). We point
out that
$$\mathcal N_\eps( \phi)=\lambda \[f(\phi)-f(0)-f'(0) (\phi)\],\ \hbox{where}\ f(\phi):={e^{P _\eps w^\al_\de+\phi} \over\int\limits_{\Omega_\eps}e^{P _\eps w^\al_\de(x)+\phi(x)}dx}.$$
Therefore, we apply the mean value theorem, we set $\phi_\theta=\theta\phi_1+(1-\theta)\phi_2 $ and $\phi_\eta=\eta\phi_1+(1-\eta)\phi_2 $ for some $\theta,\eta\in[0,1]$
\begin{align}\label{ne1}
\mathcal N_\eps( \phi_1)-\mathcal N_\eps( \phi_2)&=\lambda  \left\{f(\phi_1)-f(\phi_2)-f'(0) [\phi_1-\phi_2]\right\}
 =\lambda  \left\{f'(\phi_\theta)-f'(0)\right\}   [\phi_1-\phi_2]\nonumber \\ &=\lambda   f''(\phi_\eta) [\phi_\theta,  \phi_1-\phi_2]\end{align}
where
\begin{align*}
f''(u)[\phi,\psi]&={e^u\over\int\limits_{\Omega_\eps}e^u}\phi\psi-{e^u\over\(\int\limits_{\Omega_\eps}e^u\)^2}\phi\int\limits_{\Omega_\eps}e^u\psi-{e^u\over\(\int\limits_{\Omega_\eps}e^u\)^2}\psi\int\limits_{\Omega_\eps}e^u\phi\\
 &-{e^u\over\(\int\limits_{\Omega_\eps}e^u\)^2}\int\limits_{\Omega_\eps}e^u\psi\phi+2{e^u\over\(\int\limits_{\Omega_\eps}e^u\)^3}\int\limits_{\Omega_\eps}e^u\psi
 \int\limits_{\Omega_\eps}e^u\psi.\end{align*}

We use H\"older's inequalities
$$\|uv \|_p\le \|u\|_{pr}\|v\|_{ps},\ {1\over r }+{1\over s }=1\quad\hbox{or}\quad
\|uvw\|_p\le \|u\|_{pr}\|v\|_{ps}\|w\|_{pt},\  {1\over r }+{1\over s }+{1\over t }=1$$
and we get
\begin{align}\label{ne2}
\left\|f''(u)[\phi,\psi]\right\|_p&\le {\|e^u\|_{pr}\over\|e^u\|_{1}}\|\phi\|_{ps}\|\psi\|_{pt}+2 {\|e^u\|^2_{pr}\over\|e^u\|^2_{1}}\|\phi\|_{ps}\|\psi\|_{ps}\nonumber \\
 & +{\|e^u\|_{p }\over\|e^u\|^2_{1}}\|e^u\|_{pr}\|\phi\|_{ps}\|\psi\|_{pt}+2 {\|e^u\|_{p }\over\|e^u\|^3_{1}}\|e^u\|^2_{pr}\|\phi\|_{ps}\|\psi\|_{ps}\nonumber \\
 & \le c \( {\|e^u\|_{pr}\over\|e^u\|_{1}}+ {\|e^u\|^2_{pr}\over\|e^u\|^2_{1}}+ {\|e^u\|^3_{pr}\over\|e^u\|^3_{1}}\)\|\phi\|\|\psi\|\end{align}
because $L^{pr}(\Omega_\eps)\hookrightarrow L^{p}(\Omega_\eps)$ for any $r\ge1$ and $L^{q}(\Omega_\eps)\hookrightarrow H^1_0(\Omega_\eps)$ for any $q>1.$
Now, we put together \eqref{ne1} and \eqref{ne2} with $u=\phi_\eta=\eta\phi_1+(1-\eta)\phi_2,$ $\phi=\phi_\theta=\theta\phi_1+(1-\theta)\phi_2 $  and
$\psi= \phi_1-\phi_2.$
It only remains to estimate ${\|e^u\|_{pr}\over\|e^u\|_{1}}.$
 First of all,  arguing exactly as in the proof of \eqref{es2}
we can prove that
\begin{equation}\label{ne3}
\left\|e^{P_\eps w^\al_\de}\right\|_q=O\(\de^{{2\over q}-(2+\alpha)}\)\ \hbox{for any}\ q\ge1.\end{equation}
 Moreover, \eqref{es2} implies that
\begin{equation}\label{ne4}\left\|e^{P_\eps w^\al_\de}\right\|_1\ge {c_0\over \de^\al}\ \hbox{for some}\ c_0>0.\end{equation}
On the other hand, using the estimate $|e^a -1|\le |a| $ for any $a\in\rr$ we have
\begin{align}\label{ne5}
&\left\|e^{P_\eps w^\al_\de+\phi_\eta}-e^{P_\eps w^\al_\de }\right\|_{q}=
 \(\int\limits_{\Omega_\eps}\left| e^{P_\eps w^\al_\de+\phi_\eta}-e^{P_\eps w^\al_\de}\right|^{q}dx\)^{1/q} = \(\int\limits_{\Omega_\eps}\left|e^{P_\eps w^\al_\de}\right|^{q}\left| e^{ \phi_\eta}-1\right|^{q}dx\)^{1/q} \nonumber\\ & \le  \(\int\limits_{\Omega_\eps}\left|e^{P_\eps w^\al_\de}\right|^{q}\left| \phi_\eta\right|^{q}dx\)^{1/q}
\ \hbox{(because of the estimate $|e^a -1|\le |a| $ for any $a\in\rr$ )}
\nonumber\\ & \le\left\|e^{P_\eps w^\al_\de}\right\|_{qs}\left\| \phi_\eta\right\|_{qt}\ \hbox{(we use H\"older's estimate with ${1\over sr}+ {1\over t}=1$)}
\nonumber\\ & \le c  \de^{{2\over qs}-(2+\alpha)}\left\| \phi_\eta\right\| \ \hbox{(because of \eqref{ne3} and the fact that $L^{qt}(\Omega_\eps)\hookrightarrow H^1_0(\Omega_\eps)$)}
\nonumber\\ & \le c  \de^{ {2\over qs}-(2+\alpha) }\eps^{\sigma_p}|\ln\eps| \ \hbox{(because   $\phi_\eta\in \mathcal B_{\eps,R}$)}.\end{align}
In particular,  if $q=1$ we get
\begin{equation}\label{ne6}
\left\|e^{P_\eps w^\al_\de+\phi_\eta}-e^{P_\eps w^\al_\de }\right\|_1=O\( \de^{ {2\over  s}-(2+\alpha) }\eps^{\sigma_p}|\ln\eps|\)\ \hbox{for any $s>1.$}\end{equation}
 By \eqref{ne3}   and \eqref{ne5} we get
 \begin{equation}\label{ne7}\left\|e^{P_\eps w^\al_\de+\phi_\eta} \right\|_{q}=O\(\de^{ {2\over qs}-(2+\alpha) }\eps^{\sigma_p}|\ln\eps| \)\end{equation}
 and by \eqref{ne4} and \eqref{ne6} taking into account \eqref{delta0} and choosing $s$ close enough to 1, we get
\begin{equation}\label{ne8}\left\|e^{P_\eps w^\al_\de+\phi_\eta} \right\|_{1}\ge {c_0\over \de^\al}-c\de^{ {2\over  s}-(2+\alpha) }\eps^{\sigma_p}|\ln\eps|\ge
 {1\over \de^\al}\(c_0-c\eps^{{\al-2\over2\al}\({2\over s}-2 \)+\sigma_p}|\ln\eps|\)\ge{c_0\over 2\de^\al} .\end{equation}

Finally, by \eqref{ne7} with $q=pr$ and \eqref{ne8} taking into account \eqref{delta0} we deduce
 \begin{equation}\label{ne9}{\left\|e^{P_\eps w^\al_\de+\phi_\eta}\right\|_{pr}\over\left\|e^{P_\eps w^\al_\de+\phi_\eta}\right\|_1}=O\( \de^{ {2\over  prs}-2 }\eps^{\sigma_p}|\ln\eps|\)
=O\( \eps^{{\al-2\over2\al}\({2\over prs}-2 \)+\sigma_p}|\ln\eps|\)=O\( \eps^{\sigma'_p}|\ln\eps|\)\end{equation}
where the exponent
$\sigma'_p:={\al-2\over2\al}\({2\over prs}-2 \)+\sigma_p>0$ if $p,r,s$ are close enough to 1.

Now, we can conclude the proof. By \eqref{ne1}, \eqref{ne2} and \eqref{ne9} we get
 \begin{align}\label{ne10}
\left\|\mathcal N_\eps( \phi_1)-\mathcal N_\eps( \phi_2)\right|_{p}&\le c   \left\| f''(\phi_\eta) [\phi_\theta,  \phi_1-\phi_2]\right\|
 \le c {\left\|e^{P_\eps w^\al_\de+\phi_\eta}\right\|_{pr}\over\left\|e^{P_\eps w^\al_\de+\phi_\eta}\right\|_1} \|\phi_\theta\| \left\|\phi_1-\phi_2 \right\|\nonumber\\ &
\le   \eps^{ \sigma'_p}|\ln\eps| \(\left\|\phi_1\right\|+\left\|\phi_2 \right\|\)\left\|\phi_1-\phi_2 \right\|\end{align}
that proves our claim.

\end{proof}

\begin{proof}[Proof of Theorem \eqref{main} completed]
The existence of a solution  $u_\eps=P_\eps w^\al_\de+\phi_\eps$ follows directly by Proposition \eqref{resto}. The asymptotic shape of the solution $u_\eps$ as $\eps$ goes to zero follows by \eqref{es11}.
\end{proof}

  \appendix
  \section{ }
  In the study of the linear theory we used in a crucial way the following results.
 \begin{thm}
\label{esposito}
Let $\phi$ a   solution to   the equation
\begin{equation}\label{l1}
-\Delta \phi =2\alpha^2{|y|^{\alpha-2}\over (1+|y|^\alpha)^2}\phi\ \hbox{in}\ \rr^2,\quad \int\limits_{\rr^2}|\nabla \phi(y)|^2dy<+\infty.
\end{equation}
If $\al=2\kappa$ for some $\kappa\in\mathbb N,$ we also require that $\phi$ is $\kappa-$symmetric with respect to the origin, i.e.
 \begin{equation}\label{even}\phi(y)=\phi(\Re_\kappa y)\ \hbox{for any     $y\in\rr^2$,}\end{equation}
   where $\Re_\kappa$ is defined in \eqref{ksym}.
Then
$$\phi(y)=\gamma   {1-|y|^\al\over 1+|y|^\al}\ \hbox{for some}\ \gamma\in\rr .$$
\end{thm}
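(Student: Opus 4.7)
The strategy is to expand $\phi$ in Fourier series in the angular variable and reduce the PDE to a family of scalar ODEs that can be treated by elementary means. Writing $y = r e^{i\theta}$ and $\phi(r,\theta) = \sum_{k\in\mathbb Z} \phi_k(r)\, e^{ik\theta}$, the equation decouples into
$$-\phi_k''(r) - \frac{1}{r}\phi_k'(r) + \frac{k^2}{r^2}\phi_k(r) = 2\alpha^2\, \frac{r^{\alpha-2}}{(1+r^\alpha)^2}\, \phi_k(r) \quad \text{on } (0,\infty),$$
and the finite Dirichlet energy hypothesis descends to each mode through $\int_0^\infty (|\phi_k'|^2 + k^2|\phi_k|^2/r^2)\, r\, dr < \infty$.

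For the radial mode $k=0$, a direct computation confirms that $Z(r) := (1-r^\alpha)/(1+r^\alpha)$ solves the ODE. The reduction-of-order formula produces a second linearly independent solution $\widetilde Z(r) = Z(r)\int_{r_0}^r (s Z(s)^2)^{-1}\, ds$, which has a logarithmic singularity at the zero $r=1$ of $Z$ and hence infinite Dirichlet energy. Therefore $\phi_0 = \gamma Z$ for some $\gamma \in \mathbb R$.

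For modes $k \neq 0$ I would use the substitution $t = r^\alpha$, or equivalently factor $\phi_k(r) = r^k (1+r^\alpha)^{-1}\psi_k(r)$, to transform the ODE into a hypergeometric-type equation whose two fundamental solutions can be written explicitly. Matching the requirements of regularity at the origin and decay at infinity (both dictated by finite Dirichlet energy) forces a non-trivial solution in the $k$-th mode to exist precisely when $2|k| = \alpha$. This is possible only if $\alpha = 2\kappa$ is an even integer and $k = \pm\kappa$, in which case the surviving solutions are multiples of $r^\kappa (1+r^{2\kappa})^{-1} e^{\pm i\kappa\theta}$. Finally, the assumed $\kappa$-symmetry $\phi(y) = \phi(\mathcal R_\kappa y)$ with $\mathcal R_\kappa y = e^{i\pi/\kappa} y$ enforces $e^{ik\pi/\kappa} = 1$ on every non-trivial Fourier mode, i.e.\ $k \in 2\kappa\mathbb Z$. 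Since $k = \pm\kappa$ fails this condition, these spurious modes are killed and only $k=0$ survives, giving $\phi = \gamma Z$.

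The main obstacle is the detailed ODE analysis for $k \neq 0$: one must prove that no finite-energy solution exists unless $2|k| = \alpha$. The cleanest route is explicit integration via hypergeometric functions followed by careful bookkeeping of the behaviour at both $r=0$ and $r=\infty$ to check that the relevant combination of fundamental solutions fails to lie in the energy space. A more self-contained alternative is a Poho\v{z}aev-type identity obtained by multiplying the ODE by $r\phi_k'$ and integrating by parts, exploiting the explicit form of the weight $r^{\alpha-2}/(1+r^\alpha)^2$ to produce a rigidity relation that forces $\phi_k \equiv 0$ whenever $2|k| \neq \alpha$. Either way, the argument pivots on the precise exponent matching between $k$ and $\alpha$, which is the only mechanism that can extend the kernel beyond the one-dimensional span of $Z$.
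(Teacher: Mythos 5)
Your overall strategy (Fourier decomposition in the angle, mode-by-mode ODE analysis, then the $\kappa$-symmetry to kill the $k=\pm\kappa$ modes) is a legitimate route and is in fact how the cited classification is proved; the paper itself does not carry out this analysis but simply invokes Del Pino--Esposito--Musso \cite{dem} for the statement that the bounded kernel is spanned by $\phi_0,\phi_1,\phi_2$, and Grossi--Pistoia \cite{gp} for the fact that finite Dirichlet energy forces boundedness. However, as written your argument has a genuine gap at its central step: the claim that for $k\neq 0$ a nontrivial finite-energy solution of the $k$-th mode ODE exists precisely when $2|k|=\alpha$ is asserted, not proved. You acknowledge this yourself (``the main obstacle is the detailed ODE analysis for $k\neq 0$'') and offer two possible routes without executing either. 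Since this claim \emph{is} the theorem --- everything else is bookkeeping --- the proposal as it stands reduces the statement to an unproven assertion. To close it you must actually exhibit the two fundamental solutions (they behave like $r^{|k|}$ and $r^{-|k|}$ at both $r=0$ and $r=\infty$, because the potential $r^{\alpha-2}/(1+r^{\alpha})^{2}$ is subcritical at both ends for $\alpha>2$), show that finite energy forces the solution to be asymptotic to $r^{|k|}$ at $0$ and to $r^{-|k|}$ at $\infty$, and prove that this connection problem has no solution unless $2|k|=\alpha$; alternatively, cite \cite{dem} together with the boundedness result of \cite{gp}, as the paper does.

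There are also two incorrect justifications that should be repaired even though the conclusions they support are true. First, in the $k=0$ mode the second solution $\widetilde Z=Z\int (sZ(s)^{2})^{-1}ds$ does \emph{not} have a logarithmic singularity at the simple zero $r=1$ of $Z$: near $s=1$ one has $Z(s)\sim -\tfrac{\alpha}{2}(s-1)$, so the integral has a simple pole there and the product $\widetilde Z$ extends smoothly across $r=1$ (as it must, since the Wronskian $c/r$ is regular and nonvanishing away from the origin). The correct obstruction is that $\widetilde Z\sim \log r$ as $r\to 0$ and as $r\to\infty$, and $\int |\nabla \log|y||^{2}$ diverges at both ends. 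Second, finite Dirichlet energy in $\rr^{2}$ does not by itself ``dictate decay at infinity'' (functions growing like $\log\log|y|$ have finite energy); for $k\neq 0$ the decay must be extracted from the weighted $L^{2}$ term $k^{2}\int |\phi_k|^{2}\,dr/r$ in the energy together with the ODE, and for $k=0$ one needs the equation itself (this is exactly the content of the boundedness lemma in \cite{gp} that the paper leans on). The final step --- that the $\kappa$-symmetry forces $k\in 2\kappa\mathbb Z$ on nontrivial modes and hence eliminates $k=\pm\kappa$ --- is correct and coincides with the paper's argument.
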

\begin{proof}

If $\al $ is an even integer, Del Pino-Esposito-Musso in \cite{dem} proved that all the bounded solutions to \eqref{l1} are a linear combination of the following functions (which are written in polar coordinates)
\begin{equation}\label{tree}\phi_0(y):   ={1-|y|^\alpha\over 1+|y|^\alpha} ,\  \phi_1(y):={ |y|^{\alpha\over 2}\over 1+|y|^\alpha} \cos{\alpha\over2}\theta
,\
\phi_2(y):={ |y|^{\alpha\over 2}\over 1+|y|^\alpha} \sin{\alpha\over2}\theta.
\end{equation}
We observe that $\phi_0$   always satisfies \eqref{even}, while the functions $\phi_1$ and $\phi_2$ do not satisfy \eqref{even}.
When $\al $ is not an even integer, the situation is easier, since only the function $\phi_0$ generates the set of solutions to the linear equation \eqref{l1}.
In \cite{gp} it was proved  that any solution $\phi$ of \eqref{l1} is actually a bounded solution. That concludes  the proof.
\end{proof}

\end{document}